\newtheorem{theorem}{Theorem}
\newtheorem{proposition}[theorem]{Proposition}
\newtheorem{corollary}[theorem]{Corollary}
\theoremstyle{definition}
\newtheorem{example}[theorem]{Example}
\definecolor{webgreen}{rgb}{0,.5,0}
\definecolor{webbrown}{rgb}{.6,0,0}
\DeclareMathOperator{\Rev}{Rev}
\newcommand{\seqnum}[1]{\href{http://oeis.org/#1}{\underline{#1}}}
\begin{document}

\begin{center}
\vskip 1cm{\LARGE\bf $d$-orthogonal polynomials, Fuss-Catalan matrices and lattice paths} \vskip 1cm \large
Paul Barry\\
School of Science\\
South East Technological University\\
Ireland\\
\href{mailto:pbarry@wit.ie}{\tt pbarry@wit.ie}
\end{center}
\vskip .2 in

\begin{abstract} In this note, we show how to define certain Riordan arrays, that we call the Fuss-Catalan-Riordan arrays, by means of a special family of $d$-orthogonal polynomials. We relate the Fuss-Catalan Riordan arrays to the Fuss Catalan numbers, and to certain lattice paths.  We emphasise the role of the production matrices of the Riordan arrays that we encounter in our study. \end{abstract}

\section{Preliminaries on generalized Catalan sequences and their generating function}
The Catalan numbers $C_n=\frac{1}{n+1}\binom{2n}{n}$ \seqnum{A000108} are the $2$nd element of a family of sequences $\frac{1}{rn+1} \binom{rn+1}{n}$, where the $0$-th sequence is $\binom{1}{n}$, and the $1$st sequence is the sequence $a_n=1$. The generating function $g_r(x)$ of the $r$-th element of this family satisfies the relation
$$g_r(x)=1+x g_r(x)^r.$$
We have \cite{Conc}
$$g_r(x)=\sum_{n=0}^{\infty} \prod_{j=0}^{n-2} (rn-j)\frac{x^n}{n!}.$$
A lattice path interpretation of the numbers $\frac{1}{rn+1} \binom{rn+1}{n}$ is as follows \cite{Ehr, Conc}.
\begin{proposition} [Raney] The number of lattice paths from $(0,0)$ to $(rn,0)$ consisting of $(r-1)n$ up steps $(1,1)$ and $n$ down steps $(1,1-r)$ that never go below the $x$-axis is the Fuss-Catalan number
$$C_n^r=\frac{1}{rn+1}\binom{rn+1}{n}.$$
\end{proposition}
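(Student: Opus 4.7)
I would prove this via the classical Cycle Lemma of Dvoretzky and Motzkin (essentially what Raney himself used). Encode each path as a sequence of steps with values $+1$ (up) and $-(r-1)$ (down), so that the height at position $k$ equals the partial sum $s_1+\cdots+s_k$. The number of unrestricted sequences with $(r-1)n$ up steps and $n$ down steps is $\binom{rn}{n}$, but directly showing that a $\frac{1}{rn+1}$ proportion satisfies the non-negativity constraint requires the cyclic trick.

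First I would augment the problem: consider instead sequences of length $rn+1$ with $(r-1)n+1$ up steps and $n$ down steps. Their sum is $+1$, and the number of them is $\binom{rn+1}{n}$. Then I would invoke (and, if space permits, prove by a short inductive or pigeonhole argument) the Cycle Lemma: for any sequence of integers, each at most $1$, summing to a positive integer $k$, exactly $k$ of its $rn+1$ cyclic rotations have all partial sums strictly positive. Applied with $k=1$, this yields exactly $\frac{1}{rn+1}\binom{rn+1}{n}$ augmented sequences with all partial sums $\geq 1$.

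Next I would set up the bijection back to the original lattice paths. Any augmented sequence with all partial sums $\geq 1$ must begin with a $+1$ step (since a leading $-(r-1)$ would give first partial sum $\leq 0$ for $r\ge 2$; the case $r=1$ is trivial and checked separately). Stripping that initial up step yields a sequence of length $rn$ with $(r-1)n$ up steps, $n$ down steps, total sum $0$, and partial sums all $\geq 0$, i.e., exactly a path of the required type from $(0,0)$ to $(rn,0)$ staying weakly above the $x$-axis. The inverse is prepending $+1$. This bijection transfers the count, giving $C_n^r=\frac{1}{rn+1}\binom{rn+1}{n}$.

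The main obstacle is justifying the Cycle Lemma cleanly at the appropriate level of detail — in particular, ensuring the ``exactly one rotation'' count is watertight for sequences which may have repeated cyclic structure (one must work with the $rn+1$ ordered rotations rather than the distinct necklaces, since $\gcd(rn+1,\cdot)$-type issues resolve only because the sum is $+1$ and coprime to $rn+1$ is not needed when partial sums, not the sum itself, are required to be positive). A self-contained proof picks the position achieving the strict minimum partial sum (breaking ties by latest occurrence) and verifies that rotating to start just after it is the unique good rotation; I would include this short argument to keep the exposition self-contained.
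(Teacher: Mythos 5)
Your proposal is correct, but note that the paper does not actually prove this proposition: it is stated as Raney's classical result and attributed to the literature (the citations to Ehrenborg et al.\ and to \emph{Concrete Mathematics}), so there is no in-paper argument to compare against. What you supply is the standard Dvoretzky--Motzkin/Raney cycle-lemma proof, and your details check out: the step values $+1$ and $-(r-1)$ are all at most $1$; the augmented sequences of length $rn+1$ sum to $1$, which both forces all $rn+1$ rotations of a sequence to be distinct (a nontrivial period $p$ would make the sum a multiple of $(rn+1)/p>1$) and gives exactly one strictly positive rotation per orbit, located just after the last occurrence of the minimal partial sum; and stripping the forced initial up step is a clean bijection onto the nonnegative paths ending at $(rn,0)$. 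Your parenthetical about $\gcd$ issues is slightly muddled as written (the relevant point is simply that sum $=1$ rules out periodicity), but the tie-breaking-by-latest-minimum argument you propose to include makes the uniqueness claim watertight independently of that remark, so the proof stands as a self-contained justification of a statement the paper takes on faith.
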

For $r=-4$ to $r=4$, these generating functions expand to give the sequences that begin
\begin{align*}&1, 1, -4, 26, -204, 1771, -16380, 158224, -1577532, 16112057, -167710664,\ldots,\\
&1, 1, -3, 15, -91, 612, -4389, 32890, -254475, 2017356, -16301164,\ldots,\\
&1, 1, -2, 7, -30, 143, -728, 3876, -21318, 120175, -690690,\ldots,\\
&1, 1, -1, 2, -5, 14, -42, 132, -429, 1430, -4862,\ldots,\\
&1, 1, 0, 0, 0, 0, 0, 0, 0, 0, 0,\ldots,\\
&1, 1, 1, 1, 1, 1, 1, 1, 1, 1, 1,\ldots,\\
&1, 1, 2, 5, 14, 42, 132, 429, 1430, 4862, 16796,\ldots,\\
&1, 1, 3, 12, 55, 273, 1428, 7752, 43263, 246675, 1430715,\ldots,\\
&1, 1, 4, 22, 140, 969, 7084, 53820, 420732, 3362260, 27343888\ldots.\end{align*}
The last four sequences are, respectively, \seqnum{A000012}, \seqnum{A000108}, \seqnum{A0001764} and \seqnum{A002293} in the On-Line Encyclopedia of Integer Sequences (OEIS) \cite{SL1, SL2}.

As we will be dealing with Riordan arrays \cite{book, book2} later in this note, we are interested in the reversion of such sequences and of related sequences. Note that by the \emph{reversion} of a composable power series $f(x)=a_1 x+ a_2x^2+ a_3x^3+\cdots$ (with $a_1 \ne 0$) we shall mean the solution of $f(u)=x$ such that $u(0)=0$. This is the compositional inverse, often denoted $\bar{f}$ or $f^{\langle -1 \rangle}$. We shall also use the notation $\Rev\{f\}$.

We have the following result, which is less general than results to be found in  a more general context \cite{Yang}, but which is sufficient for our purposes.
\begin{proposition} We have
$$\Rev\{x g_r(x)\}=\frac{x}{g_{r-1}(x)}.$$
\end{proposition}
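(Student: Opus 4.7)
The plan is to directly verify the identity by computing the composition. Since $f(x) := x g_r(x)$ satisfies $f(0)=0$ and $f'(0) = g_r(0) = 1$, it has a unique compositional inverse among formal power series vanishing at zero, so it suffices to exhibit one.

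First I would set $u := g_{r-1}(x)$, which by the defining functional equation satisfies
\[
u = 1 + x u^{r-1}.
\]
I would then let $y := x/g_{r-1}(x) = x/u$, and note that $y = x + O(x^2)$ is a well-defined formal power series since $g_{r-1}(0) = 1$. The goal becomes showing that $f(y) = y\, g_r(y) = x$.

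The key step is to prove that $g_r(y) = u$. Since $g_r$ is characterized as the unique power series with constant term $1$ satisfying $w = 1 + y w^r$, I need only check that $u$ itself satisfies this equation when evaluated at $y$. Substituting,
\[
1 + y u^r = 1 + \frac{x}{u}\cdot u^r = 1 + x u^{r-1} = u,
\]
where the last equality is precisely the defining relation for $u = g_{r-1}(x)$. Hence $g_r(y) = u$.

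Combining, $y\, g_r(y) = (x/u)\cdot u = x$, which is exactly the statement $\operatorname{Rev}\{x g_r(x)\} = x/g_{r-1}(x)$. The entire argument is essentially a one-line manipulation once the substitution is made; the main (and only) subtle point is ensuring that one is legitimately working with composable power series, which is guaranteed by $g_{r-1}(0)=g_r(0)=1$ and $f'(0) \neq 0$. No heavy machinery is required — the two functional equations dovetail exactly through the change of variable $y = x/g_{r-1}(x)$.
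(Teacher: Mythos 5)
Your proof is correct, but it runs in the opposite direction to the paper's. The paper starts from the unknown inverse $\bar{f}=\Rev\{xg_r(x)\}$, manipulates the identity $\bar{f}\,g_r(\bar{f})=x$ through a chain of substitutions to arrive at the algebraic relation $x\bar{f}^{\,r-2}=\bar{f}^{\,r-1}+x^r$, and then observes that this relation is exactly the statement that $x/\bar{f}$ satisfies the defining equation of $g_{r-1}$. You instead take the explicit candidate $y=x/g_{r-1}(x)$ and verify directly that $y\,g_r(y)=x$, the whole content being the clean identity $g_r\left(\frac{x}{g_{r-1}(x)}\right)=g_{r-1}(x)$, which you get in one line from uniqueness of the solution of $w=1+yw^r$. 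Your route is shorter and isolates the structural fact --- that the two functional equations dovetail under the change of variable --- more transparently; the paper's route has the mild advantage of producing, as a by-product, the explicit polynomial relation satisfied by $\bar{f}$, and it never needs to name a candidate in advance. One small point to be precise about: the uniqueness you invoke is uniqueness of solutions of $w=1+y(x)\,w^r$ as a power series in $x$ for a fixed substituted series $y(x)$ of positive order (the coefficients of $w$ are then determined recursively), which is slightly more than the defining characterization of $g_r$ in its own variable; it is immediate, but worth a sentence. With that said, your argument is complete and valid.
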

\begin{proof} Letting $\Rev\{x g_r(x)\}=\bar{f}$, we wish to show that $\bar{f}=\frac{x}{g_{r-1}(x)}$. This is equivalent to showing
$$\frac{x}{\bar{f}}=g_{r-1}(x)=1+x g_{r-1}(x)^{r-1}=1+x \left(\frac{x}{\bar{f}}\right)^{r-1},$$
or to showing that
$$x {\bar{f}}^{r-2}={\bar{f}}^{r-1}+x^r.$$
We have
\begin{align*}
(xg_r(x))(\bar{f})&=x\quad\quad\text{by definition}\\
&=\bar{f}g_r(\bar{f})\\
&=\bar{f}(1+\bar{f}g_r(\bar{f})^r)\\
&=\bar{f}+{\bar{f}}^2 g_r(\bar{f})^r\\
&=\bar{f}+(f g_r(\bar{f}))^2 g_r(\bar{f})^{r-2}\\
&=\bar{f}+x^2 g_r(\bar{f})^{r-2}\\
&=\bar{f}+\frac{x^2}{{\bar{f}}^{r-2}} {\bar{f}}^{r-2}g_r(\bar{f})^{r-2}\\
&=\bar{f}+\frac{x^2}{{\bar{f}}^{r-2}} x^{r-2}\\
&=\bar{f}+\frac{x^r}{{\bar{f}}^{r-2}}.\end{align*}
Thus we have that
$$x=\bar{f}+\frac{x^r}{{\bar{f}}^{r-2}},$$ and hence that
$$x {\bar{f}}^{r-2}={\bar{f}}^{r-1}+x^r,$$ as required.
\end{proof}

A second reversion that will be useful is the subject of the next proposition.
\begin{proposition} We have the following reversion
$$\Rev\{xg_r(x)^r\}=\frac{x}{(1+x)^r}.$$
\end{proposition}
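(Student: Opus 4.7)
The plan is to exploit the defining functional equation $g_r(x)=1+xg_r(x)^r$ directly, rather than performing a reversion calculation analogous to the previous proposition. The key observation is that the functional equation can be rewritten as $xg_r(x)^r = g_r(x)-1$, which means the series we are reverting is essentially $g_r(x)-1$ in disguise, and $g_r$ can be recovered trivially from it.

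Concretely, I would first set $y = xg_r(x)^r$ and use the functional equation to conclude $y = g_r(x)-1$, equivalently $g_r(x) = 1+y$. Substituting this back into the identity $y = xg_r(x)^r$ yields $y = x(1+y)^r$, from which we solve algebraically for $x$ to obtain $x = \frac{y}{(1+y)^r}$. This exhibits $\frac{y}{(1+y)^r}$ as the compositional inverse of $xg_r(x)^r$, since we have produced a formal power series expression for $x$ in terms of $y = xg_r(x)^r$.

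Finally, one should verify the formal-series hypotheses: $xg_r(x)^r$ has zero constant term and nonzero linear coefficient (since $g_r(0)=1$), so its reversion exists and is unique, and $\frac{x}{(1+x)^r}$ likewise has the form $x+O(x^2)$, so the identification is valid in the ring of formal power series.

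I do not anticipate any serious obstacle; the proof is essentially a one-line consequence of the functional equation, in contrast to the previous proposition which required a more intricate chain of substitutions. If a more symmetric or conceptual presentation were desired, one could instead invoke the Lagrange inversion formula, but the direct manipulation above is shorter and self-contained.
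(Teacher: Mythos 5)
Your argument is correct and coincides with the paper's own alternative proof: the paper likewise rewrites $x=\frac{g_r(x)-1}{g_r(x)^r}=\frac{g_r(x)-1}{(1+(g_r(x)-1))^r}$, i.e., it observes that $\frac{x}{(1+x)^r}$ composed with $xg_r(x)^r=g_r(x)-1$ returns $x$, which is exactly your substitution $y=g_r(x)-1$. (The paper's first proof is a longer induction reducing to the preceding reversion proposition, but the one-line ``alternatively'' it appends is the same argument you give.)
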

\begin{proof}
We want to show that
$$x=(xg_r(x)^r)\left(\frac{x}{(1+x)^r}\right)=\frac{x}{(1+x)^r}\left(g_r\left(\frac{x}{(1+x)^r}\right)^r\right).$$ A necessary and sufficient condition for this to be true is that
$$g_r\left(\frac{x}{(1+x)^r}\right)=1+x.$$
Thus we want
$$\frac{x}{(1+x)^r}g_r\left(\frac{x}{(1+x)^r}\right)=\frac{x}{(1+x)^r} \cdot (1+x)=\frac{x}{(1+x)^{r-1}}.$$
This is equivalent to requiring that
$$(xg_r(x))\left(\frac{x}{(1+x)^r}\right)=\frac{x}{(1+x)^{r-1}}.$$ Reverting, this is equivalent to
\begin{align*}\frac{x}{(1+x)^r}&=(x g_r(x))^{\langle -1 \rangle}\left(\frac{x}{(1+x)^{r-1}}\right)\\
&=\frac{x}{g_{r-1}(x)}\left(\frac{x}{(1+x)^{r-1}}\right)\\
&=\frac{x}{(1+x)^{r-1}}\frac{1}{g_{r-1}\left(\frac{x}{(1+x)^{r-1}}\right)},\end{align*} or that
$$g_{r-1}\left(\frac{x}{(1+x)^{r-1}}\right)=1+x.$$
Now $$g_0\left(\frac{x}{(1+x)^0}\right)=g_0(x)=1+x g_0(x)^0=1+x,$$ so the proof is completed by induction.
Alternatively, we have 
\begin{align*}
x&=\frac{g_r(x)-1}{\frac{g_r(x)-1}{x}}\\
&=\frac{g_r(x)-1}{g_r(x)^r}\quad\quad \text{since $g_r(x)=1+x g_r(x)^r$}\\
&=\frac{g_r(x)-1}{(1+g_r(x)-1)^r}\\
&= \left(\frac{x}{(1+x)^r}\right)(g_r(x)-1).\end{align*}
\end{proof}
Our use of these results is typified by the following result concerning special Riordan arrays.
\begin{proposition}\label{A_seq1} The $A$-sequence of the Bell matrix
$$\left(g_r(x), xg_r(x)\right)$$ is given by
$$A(x)=g_{r-1}(x).$$
\end{proposition}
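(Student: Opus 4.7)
The plan is to invoke the standard characterization of the $A$-sequence of a Riordan array and then apply the reversion formula already proved in Proposition~2 of the excerpt. Recall that for a proper Riordan array $(g(x), f(x))$ with $f(x) = x + O(x^2)$, the $A$-sequence $A(x)$ is determined by the functional equation
\[
f(x) = x\, A(f(x)),
\]
which, upon composing both sides with the compositional inverse $\bar{f}$, rearranges to
\[
A(x) = \frac{x}{\bar{f}(x)}.
\]

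First I would verify that the Bell matrix $(g_r(x), xg_r(x))$ is indeed a proper Riordan array, which is immediate since $g_r(0) = 1$ forces $xg_r(x) = x + O(x^2)$, so its reversion $\Rev\{xg_r(x)\}$ exists as a formal power series in $x$. Then I would set $f(x) = xg_r(x)$ in the above formula and substitute the reversion computed in Proposition~2, namely
\[
\Rev\{xg_r(x)\} = \frac{x}{g_{r-1}(x)}.
\]
This yields
\[
A(x) = \frac{x}{\Rev\{xg_r(x)\}} = \frac{x}{\tfrac{x}{g_{r-1}(x)}} = g_{r-1}(x),
\]
which is exactly the claim.

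There is essentially no substantive obstacle here: the whole proof is a one-line substitution once the two ingredients (the textbook formula $A(x) = x/\bar f(x)$ and the reversion identity from Proposition~2) are in hand. If the paper has not already stated the $A$-sequence formula explicitly, I would include a brief justification by writing down the defining relation $f = xA\circ f$ in the form $xg_r(x) = xg_r(x)\cdot g_{r-1}(xg_r(x))$, which after cancellation of $xg_r(x)$ amounts to checking $g_{r-1}(xg_r(x)) = g_r(x)$; this can in turn be verified directly from the defining equation $g_r(x) = 1 + x g_r(x)^r$ by noting that $g_r(x) - 1 = xg_r(x)^r = (xg_r(x))\cdot g_r(x)^{r-1}$, so $g_r(x)$ satisfies $y = 1 + (xg_r(x))\, y^{r-1}$, which is exactly the defining equation for $g_{r-1}$ evaluated at $xg_r(x)$. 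Either route gives the result, but the cleanest presentation is simply to quote Proposition~2.
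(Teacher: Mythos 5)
Your proof is correct and follows essentially the same route as the paper: the paper likewise computes $A(x)=x/\Rev\{xg_r(x)\}$ and substitutes the reversion identity $\Rev\{xg_r(x)\}=x/g_{r-1}(x)$ from its Proposition~2 to conclude $A(x)=g_{r-1}(x)$. Your supplementary direct verification that $g_{r-1}(xg_r(x))=g_r(x)$ is a nice optional check but not part of the paper's argument.
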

\begin{proposition} \label{A_seq2} The $A$-sequence of the Riordan array
$$\left(g_r(x), xg_r(x)^r\right)$$ is given by
$$A(x)=(1+x)^r,$$ and the $Z$-sequence is given by
$$Z(x)=(1+x)^{r-1}.$$
\end{proposition}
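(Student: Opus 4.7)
The plan is to invoke the standard characterization of the $A$- and $Z$-sequences of a Riordan array $(d(x),h(x))$ with $d(0)=1$, $h(0)=0$ and $h'(0)\ne 0$: the $A$-sequence satisfies $h(x)=xA(h(x))$, equivalently $A(x)=x/\bar h(x)$, while the $Z$-sequence satisfies $d(x)=1/(1-xZ(h(x)))$, equivalently $Z(h(x))=(d(x)-1)/(xd(x))$. In our situation $d(x)=g_r(x)$ and $h(x)=xg_r(x)^r$, and the whole argument will rest on rewriting the defining relation $g_r(x)=1+xg_r(x)^r$ as $g_r(x)=1+h(x)$, together with the reversion formula from the previous proposition.

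For the $A$-sequence, I would simply invoke the identity $\mathrm{Rev}\{xg_r(x)^r\}=x/(1+x)^r$ proved just above. This gives $\bar h(x)=x/(1+x)^r$, so
$$A(x)=\frac{x}{\bar h(x)}=(1+x)^r,$$
which is the first claim.

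For the $Z$-sequence, I would start from $Z(h(x))=(d(x)-1)/(xd(x))$ and substitute. Using $g_r(x)-1=xg_r(x)^r$,
$$Z(h(x))=\frac{g_r(x)-1}{xg_r(x)}=\frac{xg_r(x)^r}{xg_r(x)}=g_r(x)^{r-1}.$$
Now the key observation: since $g_r(x)=1+xg_r(x)^r=1+h(x)$, we have $g_r(x)^{r-1}=(1+h(x))^{r-1}$, so $Z(h(x))=(1+h(x))^{r-1}$, from which $Z(x)=(1+x)^{r-1}$ follows by the invertibility of $h$ near $0$.

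There is no real obstacle: the reversion result from the preceding proposition handles the $A$-sequence in one line, and for the $Z$-sequence the only subtlety is recognizing that $1+h(x)=g_r(x)$ so that $Z$ evaluated on $h(x)$ lifts cleanly back to $(1+x)^{r-1}$. If anything, the bookkeeping risk is just getting the $A$- and $Z$-sequence conventions right (particularly the normalization $d(0)=1$, which indeed holds since $g_r(0)=1$).
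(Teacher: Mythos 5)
Your proposal is correct and follows essentially the same route as the paper: the $A$-sequence part is identical (apply $A(x)=x/\bar{h}(x)$ to the reversion $\Rev\{xg_r(x)^r\}=x/(1+x)^r$ established just before), and the $Z$-sequence part rests on the same single use of the defining relation $g_r(x)=1+xg_r(x)^r$. The only cosmetic difference is that you compute $Z(h(x))=g_r(x)^{r-1}=(1+h(x))^{r-1}$ in the forward variable and then lift back through $h$, whereas the paper first establishes $g_r\bigl(x/(1+x)^r\bigr)=1+x$ and substitutes into $Z(x)=\frac{1}{\bar f(x)}\bigl(1-\frac{1}{g(\bar f(x))}\bigr)$; these are mirror-image computations.
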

In the next section, we shall give a short introduction to Riordan arrays. We will give the proof of the last two assertions in that section.

\section{Preliminaries on Riordan arrays}
In this section, we give a brief overview of the elements of the theory of Riordan arrays \cite{book, book2, SGWW}.  To define Riordan arrays, we first of all set
$$\mathcal{F}_r= \{ f \in \mathcal{R}[[x]]\, |\, f(x)=\sum_{k=r}^{\infty} f_k x^k\}.$$
Here, $x$ is a ``dummy variable'' or indeterminate, and $\mathcal{R}$ is any ring in which the operations we will define make sense. Often, it can be any of the fields $\mathbb{Q}, \mathbb{R}$ or $\mathbb{C}$. When looking at combinatorial applications, it can be the ring of integers $\mathbb{Z}$ (in this case we demand that the diagonals of matrices consist of all $1$s).
We now let $g(x) \in \mathcal{F}_0$ and $f(x) \in \mathcal{F}_1$. This means that if $g(x)=\sum_{n=0}^{\infty} g_n x^n$, then $g_0 \ne 0$. Thus $g(x)$ has a multiplicative inverse $\frac{1}{g(x)} \in \mathcal{F}_0$. If $f(x)=\sum_{n=0}^{\infty}$, then we have $f_0=0$ and $f_1 \ne 0$. Then $f(x)$ will have a compositional inverse $\bar{f} \in \mathcal{F}_1$, also denoted by $f^{\langle -1 \rangle}$, which is the solution $v(x)$ of the equation $f(v)=x$ with $v(0)=0$. By definition, we have $\bar{f}(f(x))=x$ and $f(\bar{f}(x))=x$. A \emph{Riordan array} is then defined to be an element $(g, f) \in \mathcal{F}_0 \times \mathcal{F}_1$. The term ``array'' signifies the fact that every element $(g,  f)\in \mathcal{F}_0 \times \mathcal{F}_1$ has a matrix representation $(t_{n,k})_{0 \le n,k \le \infty}$ given by
$$ t_{n,k}=[x^n] g(x) f(x)^k.$$
Here, $[x^n]$ is the functional on $\mathcal{R}[[x]]$ that returns the coefficient of $x^n$ of an element in $\mathcal{R}[[x]]$ \cite{Method}.
With this notation, we can turn the set $\mathcal{F}_0 \times \mathcal{F}_1$ into a group using the product operation
$$(g(x), f(x)) \cdot (u(x), v(x)) = (g(x)u(f(x)), v(f(x)).$$
The inverse for this operation is given by
$$ (g(x), f(x))^{-1}= \left(\frac{1}{g(\bar{f}(x))}, \bar{f}(x)\right).$$
The identity element is given by $(1,x)$, which corresponds to the identity matrix.

When passing to the matrix representation, these operations correspond to matrix multiplication and taking the matrix inverse, respectively. Note that in the matrix representation, the generating functions of the columns are given by the geometric progression $g(x)f(x)^k$ in $\mathcal{R}[[x]]$. The operation of a Riordan array on a power series $h(x)=\sum_{n=0}^{\infty} h_n x^n$ is given by
$$ (g(x), f(x)) \cdot h(x) = g(x)h(f(x)).$$
This weighted composition rule is called \emph{the fundamental theorem of Riordan arrays}.
In matrix terms, this is equivalent to multiplying the column vector $(h_0, h_1, h_2,\ldots)^T$ by the matrix $(t_{n,k})$.
Because $f \in \mathcal{F}_0$, Riordan arrays have lower-triangular matrix representatives. As an abuse of language, we use the term ``Riordan array'' interchangeably to denote either the pair $(g, f)$ or the matrix $(t_{n,k})$, letting the context indicate which is being referred to at the time.
The bivariate generating function of the array $(g, f)$ is given by
$$\frac{g(x)}{1-y f(x)}.$$
Thus we have
$$[x^n] g(x)f(x)^k = [x^n y^k] \frac{g(x)}{1-y f(x)}.$$
To see this, we have
\begin{align*}
[x^n y^k]\frac{g(x)}{1-y f(x)} &=
[x^n y^k]  g(x) \sum_{i=0}^{\infty} y^i f(x)^i\\
&=[x^n] g(x) [y^k]\sum_{i=0}^{\infty} y^i f(x)^i \\
&=[x^n] g(x) f(x)^k.\end{align*}
Setting $y=0$ and $y=1$, respectively, in the generating function, yields the generating functions of the row sums and the diagonal sums of the matrix $(t_{n,k})$. That is, the row sums of the Riordan array $(g(x), f(x))$ have generating function $\frac{g(x)}{1-f(x)}$, while the diagonal sums have generating function $\frac{g(x)}{1-xf(x)}$.

By the \emph{rectification} of a Riordan array we shall mean the (square) matrix whose generating function is given by
$$\frac{g(x)}{1-y\frac{f(x)}{x}}.$$
We shall sometimes denote this by $\left(g(x), \frac{f(x)}{x}\right)$. If $(g(x), f(x))=(g(x),xg(x))$ is a Bell matrix, then this appears as $(g(x), g(x))$. If we have
$t_{n,k}=[x^n] g(x)f(x)^k$ as the $(n,k)$ element of $(g(x), f(x))$, then we have $t_{n+k,k}=[x^n]g(x)\left(\frac{f(x)}{x}\right)^k$ as the $(n,k)$ element of the rectification of $(g(x), f(x))$.
\begin{example} The rectification of the binomial matrix $\mathbf{B}=\left(\binom{n}{k}\right)$ is the matrix $\left(\binom{n+k}{k}\right)$.
\end{example}
If $b(x,y)$ is the generating function of a matrix $M$, where we consider $x$ to be the ``vertical'' variable, and $y$ to be the ``horizontal'' variable, then we call the matrix with generating function $b(x,xy)$ the \emph{downshift} of $M$. 
\begin{example} The binomial matrix $\mathbf{B}$ with generating function $\frac{1}{1-x-xy}$ is the downshift of the matrix $(\binom{n+k}{k})$, which has $\frac{1}{1-x-y}$ as generating function.
\end{example}

An \emph{almost Riordan array} of the first order is a lower-triangular matrix $A=\left(a_{i,j}\right)_{0 \le i,j \le \infty}$ such that the first column $a_{n,0}$ has a generating function $a(x)$ and the embedded matrix $\left(a_{i,j}\right)_{1 \le i,j \le \infty}$ is a Riordan matrix $(g(x), f(x))$. We write $(a(x), g(x), f(x))$ or $(a(x); g(x), f(x))$ for such an almost Riordan array. The set of almost Riordan arrays of first order form a group which properly contains the Riordan group. Similarly, an almost Riordan array of the second order is a lower-triangular matrix which has an embedded Riordan array after the first two columns. Again, the family of such matrices constitutes a group.

An important feature of Riordan arrays \cite{book, book2, Survey, SGWW} is that they have a sequence characterization. Specifically, a lower-triangular array $(t_{n,k})_{0 \le n,k \le \infty}$ is a Riordan array if and only if there exists a sequence $a_n, n\ge 0$, such that
$$t_{n,k}=\sum_{i=0}^{\infty}a_i t_{n-1,k-1+i}.$$
This sum is actually a finite sum, since the matrix is lower-triangular. For such a matrix $M$, we let  $$P=M^{-1}\overline{M},$$ where $\overline{M}$ is the matrix $M$ with its top row removed. Then the matrix $P$ has the form that begins
$$\left(
\begin{array}{ccccccc}
 z_0 & a_0 & 0 & 0 & 0 & 0 & 0 \\
 z_1 & a_1 & a_0 & 0 & 0 & 0 & 0 \\
 z_2 & a_2 & a_1 & a_0 & 0 & 0 & 0 \\
 z_3 & a_3 & a_2 & a_1 & a_0 & 0 & 0 \\
 z_4 & a_4 & a_3 & a_2 & a_1 & a_0 & 0 \\
 z_5 & a_5 & a_4 & a_3 & a_2 & a_1 & a_0 \\
 z_6 & a_6 & a_5 & a_4 & a_3 & a_2 & a_1 \\
\end{array}
\right).$$

Here, $z_0,z_1,z_2,\ldots$ is an ancillary sequence which exists for any lower-triangular matrix. The matrix $P$ is called the \emph{production matrix} of the Riordan matrix $M$. The sequence characterization of renewal arrays was first described by Rogers \cite{He_A, Rogers}.

If $(g(x), f(x))$ is a Riordan array whose $A$ and $Z$-sequences have generating functions $A(x)$ and $Z(x)$, respectively, then we have
$$(g(x), f(x))^{-1}=\left(\frac{A(x)-xZ(x)}{A(x)}, \frac{x}{A(x)}\right),$$ which shows that we can recover a knowledge of $g(x)$ and $f(x)$ from a knowledge of $A(x)$ and $Z(x)$ (at least theoretically). This is so because we have
$$A(x)=\frac{x}{\bar{f}(x)},\quad\quad Z(x)=\frac{1}{\bar{f}(x)}\left(g_0-\frac{1}{g(\bar{f}(x))}\right).$$
We can now give the proofs of Proposition \ref{A_seq1} and Proposition \ref{A_seq2}.
\begin{proof} The generating function of the  $A$-sequence of the Riordan array $(g_r(x), xg_r(x))$ is given by
$$A(x)=\frac{x}{\Rev\{g_r(x)\}}=\frac{x}{\frac{x}{g_{r-1}(x)}}=g_{r-1}(x).$$
For the Riordan array $(g_r(x), xg_r(x)^r)$, we have
$$A(x)=\frac{x}{\Rev\{xg_r(x)^r\}}=\frac{x}{\frac{x}{(1+x)^r}}=(1+x)^r.$$
\end{proof}
In order to prove that $Z=(1+x)^{r-1}$ for the Riordan array $(g_r(x), xg_r(x)^r)$, we proceed as follows.
\begin{proof} We have
$$x=(xg_r(x)^r)\left(\frac{x}{(1+x)^r}\right)=\frac{x}{(1+x)^r}g_r\left(\frac{x}{(1+x)^r}\right)^r.$$
Thus we have $$g_r\left(\frac{x}{(1+x)^r}\right)=1+\frac{x}{(1+x)^r}g_r\left(\frac{x}{(1+x)^r}\right)^r=1+x.$$
Now for the Riordan array $(g_r(x), xg_r(x))$, we have
$$Z(x)=\frac{1}{\frac{x}{(1+x)^r}}\left(1-\frac{1}{g_r\left(\frac{x}{(1+x)^r}\right)}\right),$$ thus
$$Z(x)=\frac{(1+x)^r}{x}\left(1-\frac{1}{1+x}\right)=\frac{(1+x)^r}{x} \frac{x}{1+x}=(1+x)^{r-1}.$$
\end{proof}
In the case of two Riordan arrays $(g_1,f_1)$ and $(g_2,f_2)$ with respective $A$ and $Z$-sequences $A_1, Z_1$ and $A_2, Z_2$, the product array $(g_1,f_1)\cdot (g_2,f_2)$ will have its $A$-sequence given by
$$A_3(x)=\left(A_2(x), \frac{x}{A_2(x)}\right)\cdot A_1(x),$$ and
$$Z_3(x)=\frac{Z_2(x)A_3(x)}{A_2(x)}+Z_1\left(\frac{x}{A_2(x)}\right)\left(1-\frac{xZ_2(x)}{A_2(x)}\right).$$

An alternative approach to the sequence characterization of a Riordan array is to use a matrix characterization \cite{He_M, Merlini}.  One form that such a matrix characterization may take is the following \cite{He_M, Merlini}.
\begin{theorem} A lower-triangular array $(t_{n,k})_{0 \le n,k \le \infty}$ is a Riordan array if and only if there exists another array $A=(a_{i,j})_{i,j \in \mathbb{N}_0}$ with $a_{0,0} \ne 0$, and a sequence $(\rho_j)_{j \in \mathbb{N}_0}$ such that
$$t_{n+1,k+1}=\sum_{i \ge 0} \sum_{j \ge 0} a_{i,j} t_{n-i, k+j} + \sum_{j \ge 0} \rho_j t_{n+1,k+j+2}.$$
\end{theorem}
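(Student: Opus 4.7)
My plan is to prove the two implications separately; the forward direction is essentially a specialisation of the classical $A$-sequence characterization, while the reverse direction will rely on a generating-function translation together with a uniqueness argument.

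For ($\Rightarrow$), suppose $(t_{n,k})$ is a Riordan array $(g,f)$ with $A$-sequence $(a_j)_{j\ge 0}$. The sequence characterization already gives $t_{n+1,k+1}=\sum_{j\ge 0} a_j\,t_{n,k+j}$, which is exactly the claimed identity with the choice $a_{i,j}:=\delta_{i,0}\,a_j$ and $\rho_j:=0$. The condition $a_{0,0}=a_0=A(0)=f'(0)$ is nonzero because $f\in\mathcal{F}_1$.

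For ($\Leftarrow$), I first translate the recurrence into column generating functions. Writing $T_k(x)=\sum_{n\ge 0}t_{n,k}x^n$, multiplying the recurrence by $x^{n+1}$, summing over $n\ge 0$, and using lower-triangularity (so that $t_{0,\ell}=0$ for $\ell\ge 1$), I obtain
$$T_{k+1}(x)=x\sum_{i,j\ge 0}a_{i,j}x^i T_{k+j}(x)+\sum_{j\ge 0}\rho_j\,T_{k+j+2}(x).$$
Introducing $\mathcal{A}(x,y)=\sum_{i,j}a_{i,j}x^iy^j$ and $\mathcal{R}(y)=\sum_j\rho_j y^j$, the natural candidate for the second component of the Riordan array is the unique formal power series $f$ with $f(0)=0$ satisfying the implicit equation
$$f(x)=x\,\mathcal{A}(x,f(x))+f(x)^2\,\mathcal{R}(f(x)).$$
A coefficient-by-coefficient construction shows that such $f$ exists and lies in $\mathcal{F}_1$ with $f'(0)=a_{0,0}$; this is precisely where the hypothesis $a_{0,0}\ne 0$ is used.

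I would then set $g(x):=T_0(x)$ and compare $(T_k)$ with the columns $\tilde{T}_k(x)=g(x)f(x)^k$ of the Riordan array $(g,f)$. Substituting $\tilde T_k$ into the column-gf recurrence and dividing through by $g(x)f(x)^k$ collapses the identity to the defining equation for $f$, so $(g,f)$ satisfies the same recurrence as $(t_{n,k})$ and agrees with it in the zeroth column. It then remains to show that the recurrence, together with the first column, determines the entire lower-triangular array. I would do this by a double induction, outer on the row index $n$ and inner downward on the column index within row $n+1$. The diagonal entry is forced by the $k=n$ case (where the $\rho$-sum is empty and only the $(i,j)=(0,0)$ term of the first sum survives, giving $t_{n+1,n+1}=a_{0,0}t_{n,n}$); each remaining entry of row $n+1$ is then expressed, via the recurrence, in terms of entries strictly above or strictly to its right.

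The main obstacle is precisely this uniqueness step. The presence of the $\rho$-sum prevents one from reading off the rows top-to-bottom in the style of the classical $A$-sequence proof, and instead one must exploit lower-triangularity (which makes each row finite and anchors the downward inner induction on the diagonal) together with $a_{0,0}\ne 0$ (which allows the diagonal recursion to be initiated and propagated). Once uniqueness is in hand the conclusion is immediate: the given array coincides with the Riordan array $(g,f)$ and is therefore itself Riordan.
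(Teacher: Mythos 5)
The paper does not prove this theorem: it is quoted from He and from Merlini--Rogers--Sprugnoli--Verri, so there is no in-paper argument to compare yours against. Your proposal is sound and is essentially the standard proof of this matrix characterization; in particular, the implicit equation $f(x)=x\,\mathcal{A}(x,f(x))+f(x)^2\mathcal{R}(f(x))$ that you extract is exactly the relation $\frac{f(x)}{x}=\sum_{i\ge 0}x^iR^{(i)}(f(x))+\frac{f(x)^2}{x}\rho(f(x))$ that the paper records immediately after the theorem statement, and your diagonal-anchored, rightward-to-leftward induction is the right way to handle the uniqueness step that the $\rho$-sum complicates. The only point worth noting is that, as literally stated, the hypotheses do not force $t_{0,0}\ne 0$, which you implicitly need when declaring $g:=T_0\in\mathcal{F}_0$; this is a looseness of the quoted statement (which tacitly assumes a proper lower-triangular array) rather than a defect of your argument.
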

As we have seen, the power series definition of a Riordan array is as follows. A Riordan array is defined by a pair of power series, $g(x)$ and $f(x)$, where
$$g(x)=g_0 + g_1 x + g_2 x^2+ \cdots, \quad g_0 \ne 0,$$ and
$$f(x)=f_1 x + f_2 x^2+ f_3 x^3+\cdots, \quad f_0=0 \text{ and } f_1 \ne 0.$$
We then have
$$t_{n,k}=[x^n] g(x)f(x)^k,$$ where $[x^n]$ is the functional that extracts the coefficient of $x^n$.
The relationship between $f(x)$ and the pair $(A, \rho)$ is the following.
$$\frac{f(x)}{x}=\sum_{i \ge 0} x^i R^{(i)}(f(x))+\frac{f(x)^2}{x}\rho(f(x)),$$ where
$R^{(i)}$ is the generating series of the $i$-th row of $A$, and $\rho(x)$ is the generating series of the sequence $\rho_n$.
More generally, we have the following scheme.
$$\left(
\begin{array}{ccccccc}
\vdots & \cdots & \cdots & \cdots & \cdots & \cdots & \cdots \\
x^2 & \cdots & t_{n-3,k-1} & t_{n-3,k} & t_{n-3,k+1} & t_{n-3,k+2} & \cdots \\
x & \cdots & t_{n-2,k-1} & t_{n-2,k} & t_{n-2,k+1} & t_{n-2,k+2} & \cdots \\
1 & \cdots & t_{n-1,k-1} & t_{n-1,k} & t_{n-1,k+1} & t_{n-1,k+2} & \cdots \\
1/x & \cdots & t_{n,k-1} & t_{n,k} & t_{n,k+1} & t_{n,k+2} & \cdots \\
1/x^2 & \cdots &  &  &  & t_{n+1,k+2} & \cdots \\
 &  &  &  &  &  &  \\
\cdots & \cdots & 1 & u & u^2 & u^3 & \cdots \\
\end{array}
\right).$$
Thus if, for instance, we have
$$t_{n,k}=t_{n-1,k-1}+ \alpha t_{n-2,k}+ \beta t_{n-2,k+1}+ \gamma t_{n,k+1},$$ with standard initial conditions, then we have
$$\frac{u}{x}=1+\alpha xu+\beta xu^2+ \gamma \frac{u^2}{x},$$ to give
$$u=f(x)=\frac{1-\alpha x^2-\sqrt{1-4 \gamma x - 2\alpha x^2-4 \beta x^3+\alpha^2 x^4}}{2(\gamma+\beta x^2)}.$$
We can interpret this result as saying that the power series $f(x)$ describing the Riordan array that counts lattice paths from $(0,0)$ to $(n,k)$, with step set $\{\alpha*(2,0), \beta*(2,-1), \gamma*(0,-1)\}$ is given by $f(x)=u(x)$. Here, $\alpha*(2,0)$ signifies that the $(2,0)$ steps can have $\alpha$ colors.

\section{Preliminaries on constant coefficient $d$-orthogonal polynomials}
We have seen that for the Riordan array $(g(x), xg_r(x))$, we have
$$Z(x)=(1-x)^{r+1}, A(x)=(1+x)^r.$$ The resulting production matrices are then banded matrices. For instance, if $r=3$, we have
$$Z(x)=1+2x+x^2, A(x)=1+3x+3x^2+x^3,$$ and the production matrix for $(g_3(x), xg_3(x))$ is begins
$$\left(
\begin{array}{ccccccc}
1 &1 & 0 & 0 & 0 & 0 & 0 \\
2 & 3 & 1 & 0 & 0 & 0 & 0 \\
1 & 3 & 3 & 1 & 0 & 0 & 0 \\
0 & 1 & 3 & 3 & 1 & 0 & 0 \\
0 & 0 & 1 & 3 & 3 & 1 & 0 \\
0 & 0 & 0 & 1 & 3 & 3 & 1 \\
0 & 0 & 0 & 0 & 1 & 3 & 3 \\
\end{array}\right).$$ Such matrices occur in the context of $d$-orthogonality, which we now explore in the context of constant coefficient recurrences and Riordan (or almost Riordan) arrays.

For constants $a$,  $b$, and $\alpha$, the recurrence
$$P_n(x)=(x-a)P_{n-1}(x)-b P_{n-2}(x),$$ with initial conditions $P_0(x)=1$, $P_1(x)=x-\alpha$, defines a polynomial sequence whose coefficient array is given by the Riordan array
$$\left(\frac{1+(a-\alpha)x}{1+ax+bx^2}, \frac{x}{1+ax+bx^2}\right).$$
The inverse matrix of this array will then have a tri-diagonal production matrix that begins
$$\left(
\begin{array}{ccccccc}
\alpha &1 & 0 & 0 & 0 & 0 & 0 \\
b &a & 1 & 0 & 0 & 0 & 0 \\
0 &b & a & 1 & 0 & 0 & 0 \\
0 & 0 & b & a & 1 & 0 & 0 \\
0 & 0 & 0 & b & a & 1 & 0 \\
0 & 0 & 0 & 0 & b & a & 1 \\
0 & 0 & 0 & 0 & 0 & b & a \\
\end{array}\right).$$
For constants $a, b, c$ and $\alpha, \beta, \gamma$, the recurrence
$$P_n(x)=(x-a)P_{n-1}(x)-b P_{n-2}(x)-c P_{n-3}(x),$$ with initial conditions $P_0(x)=1$, $P_1(x)=x-\alpha$, $P_2(x)=x^2-\beta x - \gamma$, defines a polynomial sequence $P_n(x)$ whose coefficient array is (in general) an almost Riordan array of first order, whose inverse will have a $4$-diagonal production array that begins
$$\left(
\begin{array}{ccccccc}
\alpha &1 & 0 & 0 & 0 & 0 & 0 \\
\alpha(\beta-\alpha) & \beta-\alpha & 1 & 0 & 0 & 0 & 0 \\
c & b & a & 1 & 0 & 0 & 0 \\
0 & c & b & a & 1 & 0 & 0 \\
0 & 0 & c & b & a & 1 & 0 \\
0 & 0 & 0 & c & b & a & 1 \\
0 & 0 & 0 & 0 & c & b & a \\
\end{array}\right).$$
We call such a family of polynomials $d$-orthogonal, for $d=2$. Ordinary orthogonal polynomials are $1$-orthogonal. Note that we call the inverse of the coefficient array of a family of $d$-orthogonal polynomials the \emph{moment} matrix, in analogy with the situation for orthogonal polynomials.

We note that when $\beta-\alpha=a$, then we get a production matrix that begins
$$\left(
\begin{array}{ccccccc}
\alpha & 1 & 0 & 0 & 0 & 0 & 0 \\
a\alpha & a & 1 & 0 & 0 & 0 & 0 \\
c & b & a & 1 & 0 & 0 & 0 \\
0 & c & b & a & 1 & 0 & 0 \\
0 & 0 & c & b & a & 1 & 0 \\
0 & 0 & 0 & c & b & a & 1 \\
0 & 0 & 0 & 0 & c & b & a \\
\end{array}\right),$$ and hence the coefficient array of the $2$-orthogonal polynomial sequence $P_n(x)$ is given by the Riordan array
$$\left(\frac{1-(a-\alpha)x-(b-a \alpha)x^2}{1+ax+bx^2+cx^2}, \frac{x}{1+ax+bx^2+cx^3}\right).$$
For constants $a,b,c,d$ and $\alpha,\beta,\gamma,\rho,\sigma,\tau$, the recurrence
$$P_n(x)=(x-a)P_{n-1}(x)-b P_{n-2}(x)-c P_{n-3}(x)-d P_{n-4}(x),$$ with initial conditions $P_0(x)=1$, $P_1(x)=x-\alpha$, $P_2(x)=x^2-\beta x - \gamma$, $P_3(x)=x^3-\rho x^2- \sigma x-\tau$, defines a polynomial sequence $P_n(x)$ whose coefficient array is (in general) an almost Riordan array of second order, whose inverse will have a $5$-diagonal production array that begins
$$\left(
\begin{array}{ccccccc}
\alpha  & 1 & 0 & 0 & 0 & 0 & 0 \\
\alpha(\beta-\alpha) & \beta-\alpha & 1 & 0 & 0 & 0 & 0 \\
-\alpha(\beta(\beta-\rho)+\gamma-\sigma)-\gamma(\beta-\rho)+\tau & \beta(\rho-\beta)-\gamma+\sigma & \rho-\beta & 1 & 0 & 0 & 0 \\
d & c & b & a & 1 & 0 & 0 \\
0 & d & c & b & a & 1 & 0 \\
0 & 0 & 0 & c & b & a & 1 \\
0 & 0 & 0 & d & c & b & a \\
\end{array}\right).$$
In the event that we have $\beta-\alpha=a$, $\rho-\beta=a$ and $\beta(\rho-\beta)-\gamma+\sigma=b$, then the coefficient array of the $3$-orthogonal sequence $P_n(x)$ will be the Riordan array
$$\left(\frac{1-(\alpha-a)x-(a \alpha-\beta+\gamma)x^2+(b\alpha +c-\tau)x^3-\gamma x^4}{1+ax+bx^2+cx^3+dx^4}, \frac{x}{1+ax+bx^2+cx^3+dx^4}\right).$$

Generalizations for $d \ge 4$ follow a similar pattern. In general, a $d$-orthogonal polynomial sequence (whose recurrence has constant coefficients) will have a coefficient array that is an almost Riordan array of order $d-1$, which for certain parameter choices may in fact be a Riordan array.

\section{Special $d$-orthogonal polynomials}
We now look at the case of $d$-orthogonal polynomials, whose coefficient array is a Riordan array, where the production array of the inverse of the coefficient array (that is, the moment array) has
$$Z(x)=(1+x)^{r-1}, A(x)=(1+x)^r.$$ Thus we are considering the family of Riordan arrays $(g_r(x), xg_r(x)^r)$.
\begin{example}  When $r=1$, we obtain as moment array the binomial matrix $\mathbf{B}=\left(\binom{n}{k}\right)$. Thus the coefficient array is the inverse binomial matrix $\mathbf{B}^{-1}=\left((-1)^{n-k}\binom{n}{k}\right)$.
\end{example}
\begin{example} When $r=2$, we have $A(x)=(1+x)^2=1+2x+x^2$, and $Z(x)=1+x$. The coefficient matrix is that given by the Riordan array
$$\left(\frac{1}{1+x}, \frac{x}{1+2x+x^2}\right).$$ The corresponding moment array is given by $(c(x), xc(x)^2)$, where $c(x)=\frac{1-\sqrt{1-4x}}{2x}$ is the generating function of the Catalan numbers $C_n=\frac{1}{n+1}\binom{2n}{n}$. This moment matrix \seqnum{A039599} begins
$$\left(
\begin{array}{ccccccc}
1& 0 & 0 & 0 & 0 & 0 & 0 \\
1 & 1 & 0 & 0 & 0 & 0 & 0 \\
2 & 3 & 1 & 0 & 0 & 0 & 0 \\
5 & 9 & 5 & 1 & 0 & 0 & 0 \\
14 &28 & 20 & 7 & 1 & 0 & 0 \\
42 & 90 & 75 & 35 & 9 & 1 & 0 \\
132 & 297 & 275 & 154 & 54 & 11 & 1 \\
\end{array}\right),$$ and its production matrix begins
$$\left(
\begin{array}{ccccccc}
1 &1 & 0 & 0 & 0 & 0 & 0 \\
1 & 2 & 1 & 0 & 0 & 0 & 0 \\
0 & 1 & 2 & 1 & 0 & 0 & 0 \\
0 & 0 & 1 & 2 & 1 & 0 & 0 \\
0 & 0 & 0 & 1 & 2 & 1 & 0 \\
0 & 0 & 0 & 0 & 1 & 2 & 1 \\
0 & 0 & 0 & 0 & 0 & 1 & 2 \\
\end{array}\right).$$
The corresponding family of orthogonal polynomials satisfy
$$P_n(x)=(x-2)P_{n-1}(x)-P_{n-2}(x),$$ subject to $P_0(1)=1$ and $P_1(x)=x-1$. The polynomial sequence begins
$$1, x - 1, x^2 - 3x + 1, x^3 - 5x^2 + 6x - 1, x^4 - 7x^3 + 15x^2 - 10x + 1,\ldots.$$
\end{example}
\begin{example} When $r=3$, we have $A(x)=(1+x)^3=1+3x+3x^2+x^3$, and $Z(x)=(1+x)^2=1+2x+x^2$. Hence the coefficient array is the Riordan array
$$\left(\frac{1}{1+x}, \frac{x}{(1+x)^3}\right),$$ and the moment matrix is the Riordan array
$$\left(\frac{1}{1+x}, \frac{x}{(1+x)^3}\right)^{-1}=(t(x), x t(x)^3),$$ where $t(x)$ is the generating function of the ternary numbers
$t_n=\frac{1}{2n+1}\binom{3n}{n}$. We have
$$t(x)=\frac{2}{\sqrt{3x}}\sin\left(\frac{1}{3}\sin^{-1}\left(\sqrt{\frac{27x}{4}}\right)\right).$$
This moment matrix (a signed version is given by \seqnum{A109956}) begins
$$\left(
\begin{array}{ccccccc}
1& 0 & 0 & 0 & 0 & 0 & 0 \\
1 & 1 & 0 & 0 & 0 & 0 & 0 \\
3 & 4 & 1 & 0 & 0 & 0 & 0 \\
12 & 18 & 7 & 1 & 0 & 0 & 0 \\
55 & 88 & 42 & 10 & 1 & 0 & 0 \\
273 & 455 & 245 & 75 & 13 & 1 & 0 \\
1428 & 2448 & 1428 & 510 & 117 & 16 & 1 \\
\end{array}\right),$$ and its production matrix begins
$$\left(
\begin{array}{ccccccc}
1 &1 & 0 & 0 & 0 & 0 & 0 \\
2 & 3 & 1 & 0 & 0 & 0 & 0 \\
1 & 3 & 3 & 1 & 0 & 0 & 0 \\
0 & 1 & 3 & 3 & 1 & 0 & 0 \\
0 & 0 & 1 & 3 & 3 & 1 & 0 \\
0 & 0 & 0 & 1 & 3 & 3 & 1 \\
0 & 0 & 0 & 0 & 1 & 3 & 3 \\
\end{array}\right).$$
The corresponding family of $2$-orthogonal polynomials satisfies the recurrence
$$P_n(x)=(x-3)P_{n-1}(x)-3P_{n-2}(x)-P_{n-3}(x),$$ subject to $P_0(1)=1$, $P_1(x)=x-1$ and $P_2(x)=x^2-4x+1$.  The polynomial sequence begins
$$1, x - 1, x^2 - 4x + 1, x^3 - 7x^2 + 10x - 1, x^4 - 10x^3 + 28x^2 - 20x + 1,\ldots.$$
\end{example}
\begin{example} When $r=4$, we have $A(x)=(1+x)^4$ and $Z(x)=(1+x)^3$. We find that the coefficient array of the corresponding family of $3$-orthogonal polynomials is given by the Riordan array
$$\left(\frac{1}{1+x}, \frac{x}{(1+x)^4}\right).$$ The moment matrix is then the matrix
 $$\left(\frac{1}{1+x}, \frac{x}{(1+x)^4}\right)^{-1}=(q(x), xq(x)^4),$$ where $q(x)$ is the generating function of the quaternary numbers
$q_n=\frac{1}{3n+1}\binom{4n}{n}$. This Riordan array (for a signed version, see \seqnum{A109962}) begins
$$\left(
\begin{array}{ccccccc}
1& 0 & 0 & 0 & 0 & 0 & 0 \\
1 & 1 & 0 & 0 & 0 & 0 & 0 \\
4 & 5 & 1 & 0 & 0 & 0 & 0 \\
22 & 30 & 9 & 1 & 0 & 0 & 0 \\
140 & 200 & 72 & 13 & 1 & 0 & 0 \\
969 & 1425 & 570 & 130 & 17 & 1 & 0 \\
7084 & 10626 & 4554 & 1196 & 204 & 21 & 1 \\
\end{array}\right),$$ and its production matrix begins
$$\left(
\begin{array}{ccccccc}
1 &1 & 0 & 0 & 0 & 0 & 0 \\
3 & 4 & 1 & 0 & 0 & 0 & 0 \\
3 & 6 & 4 & 1 & 0 & 0 & 0 \\
1 & 4 & 6 & 4 & 1 & 0 & 0 \\
0 & 1 & 4 & 6 & 4 & 1 & 0 \\
0 & 0 & 1 & 4 & 6 & 4 & 1 \\
0 & 0 & 0 & 1 & 4 & 6 & 4 \\
\end{array}\right).$$
The corresponding family of $3$-orthogonal polynomials begins
$$1, x - 1, x^2 - 5x + 1, x^3 - 9x^2 + 15x - 1, x^4 - 13x^3 + 45x^2 - 35x + 1, x^5 - 17x^4 + 91x^3 - 165x^2 + 70x - 1,\ldots.$$ These polynomials satisfy the recurrence
$$P_n(x)=(x-4)P_{n-1}(x)-6P_{n-2}(x)-4P_{n-3}(x)-P_{n-4}(x).$$
\end{example}

\section{The general pre Fuss-Catalan-Riordan matrix}
We wish to calculate the general term of the matrix
$$(g_r(x), xg_r(x)^r)=\left(\frac{1}{1+x}, \frac{x}{(1+x)^r}\right)^{-1}.$$ For this, we use the Lagrange Inversion formula \cite{LI}. The general $(n,k)$-th term $\tau_{n,k}$ of this array is given by
$$\tau_{n,k}=[x^n] \frac{1}{g(\bar{f})} \bar{f}^k,$$ where $g(x)=\frac{1}{1+x}$, and $f(x)=\frac{x}{(1+x)^r}$. Letting
$$G(x)=\frac{x^k}{g(x)},$$ we have
\begin{align*}\tau_{n,k}&=[x^n] \frac{1}{g(\bar{f})} \bar{f}^k\\
&=[x^n]G(\bar{f})\\
&=\frac{1}{n}[x^{n-1}] G'(x)\left(\frac{x}{f(x)}\right)^n\\
&=\frac{1}{n}[x^{n-1}]((k+1)x^k+kx^{k-1})(1+x)^{rn}\\
&=\frac{k+1}{n}\binom{rn}{n-k-1}+\frac{k}{n}\binom{rn}{n-k}\\
&=\frac{rk+1}{(r-1)n+k+1}\binom{rn}{n-k}.\end{align*}
\begin{proposition} The general $(n,k)$ term of the Riordan array $(g_r(x), xg_r(x)^r)$ is given by
$$\tau_{n,k}=\frac{rk+1}{(r-1)n+k+1}\binom{rn}{n-k}.$$
\end{proposition}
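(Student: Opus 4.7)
The plan is to apply the Lagrange inversion formula to $f(x) = \frac{x}{(1+x)^r}$, since by the Riordan inversion rule, $(g_r(x), xg_r(x)^r) = (g(x), f(x))^{-1}$ with $g(x) = \frac{1}{1+x}$ and $f(x) = \frac{x}{(1+x)^r}$. Consequently, the $(n,k)$-entry of the target array is
$$\tau_{n,k} = [x^n]\, \frac{\bar{f}(x)^k}{g(\bar{f}(x))}.$$

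First I would exploit that $g(x) = \frac{1}{1+x}$, which gives the pleasant simplification $\frac{1}{g(\bar{f}(x))} = 1 + \bar{f}(x)$. This reduces the problem to computing two coefficients of powers of $\bar{f}$:
$$\tau_{n,k} = [x^n]\bar{f}(x)^k + [x^n]\bar{f}(x)^{k+1}.$$

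Next I would apply Lagrange inversion in the form $[x^n]H(\bar{f}(x)) = \frac{1}{n}[x^{n-1}] H'(x) (x/f(x))^n$. Taking $H(x) = x^k + x^{k+1}$ and using that $x/f(x) = (1+x)^r$, this yields
$$\tau_{n,k} = \frac{1}{n}[x^{n-1}]\bigl(kx^{k-1} + (k+1)x^k\bigr)(1+x)^{rn} = \frac{k}{n}\binom{rn}{n-k} + \frac{k+1}{n}\binom{rn}{n-k-1}.$$

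Finally I would simplify the sum. Using the identity $\binom{rn}{n-k-1} = \binom{rn}{n-k}\cdot \frac{n-k}{(r-1)n+k+1}$, I would factor out $\binom{rn}{n-k}$ and combine over the common denominator $n((r-1)n+k+1)$. The numerator becomes $k((r-1)n+k+1)+(k+1)(n-k)$, which expands as $k(r-1)n + k^2 + k + kn - k^2 + n - k = n(rk+1)$, yielding the stated closed form. The main obstacle is essentially bookkeeping: the proof is a clean application of Lagrange inversion together with a short algebraic identity, with no deeper conceptual difficulty beyond recognizing that the special shape of $g$ avoids having to deal with $\bar{f}$ in the denominator at all.
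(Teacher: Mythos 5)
Your proposal is correct and follows essentially the same route as the paper: the paper also applies Lagrange inversion to $G(x)=x^k/g(x)=x^k(1+x)$ with $x/f(x)=(1+x)^r$, arriving at the same two-binomial expression $\frac{k+1}{n}\binom{rn}{n-k-1}+\frac{k}{n}\binom{rn}{n-k}$ before simplifying. Your write-up merely makes explicit the final algebraic combination step that the paper leaves to the reader.
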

We call the Riordan array $\left(g_r(x), xg_r(x)^r\right)$ the pre-Fuss-Catalan-Riordan array.

\section{The Fuss-Catalan numbers: definition via Riordan arrays and examples}
By the Fuss-Catalan numbers we shall understand the rectification $(g_r(x), g_r(x))$ of the ``Fuss-Catalan-Riordan'' Riordan arrays $(g_r(x), xg_r(x))$. In this section, we explore links between the Fuss-Catalan numbers and the Riordan arrays $(g_r(x), xg_r(x)^r)$, and we provide matrix factorizations that allow us to understand the structure of the Fuss-Catalan numbers from different perspectives.

We begin by considering the case of $r=3$. Considering the moment matrix $\left(\frac{1}{1+x}, \frac{x}{(1+x)^3}\right)^{-1}=(t(x), xt(x)^3)$  in this case, and multiplying it on the right by the transpose of the binomial matrix, we get the matrix that begins
$$\left(
\begin{array}{ccccccc}
1& 0 & 0 & 0 & 0 & 0 & 0 \\
1 & 1 & 0 & 0 & 0 & 0 & 0 \\
3 & 4 & 1 & 0 & 0 & 0 & 0 \\
12 & 18 & 7 & 1 & 0 & 0 & 0 \\
55 & 88 & 42 & 10 & 1 & 0 & 0 \\
273 & 455 & 245 & 75 & 13 & 1 & 0 \\
1428 & 2448 & 1428 & 510 & 117 & 16 & 1 \\
\end{array}\right) \cdot \mathbf{B}^T$$
$$=\left(
\begin{array}{ccccccc}
1 &1 & 1 & 1 & 1 & 1 & 1 \\
1 & 2 & 3 & 4 & 5 & 6 & 7 \\
3 & 7 & 12 & 18 & 25 & 33 & 42 \\
12 & 30 & 55 & 88 & 130 & 182 & 245 \\
55 & 143 & 273 & 455 & 700 & 1020 & 1428 \\
273 & 728 & 1428 & 2448 & 3876 & 5814 & 8379 \\
1428 & 3876 &7752 & 13566 & 21945 & 33649 & 49588 \\
\end{array}\right).$$
This is the Fuss-Catalan triangle for $r=3$. Moreover, if we downshift this square matrix to get the triangle that begins
$$\left(
\begin{array}{ccccccc}
1& 0 & 0 & 0 & 0 & 0 & 0 \\
1 & 1 & 0 & 0 & 0 & 0 & 0 \\
3 & 2 & 1 & 0 & 0 & 0 & 0 \\
12 & 7 & 3 & 1 & 0 & 0 & 0 \\
55 & 30 & 12 & 4 & 1 & 0 & 0 \\
273 & 143 & 55 & 18 & 5 & 1 & 0 \\
1428 & 728 & 273 & 88 & 25 & 6 & 1 \\
\end{array}\right),$$
then this is the Riordan array $(t(x), xt(x))$. The corresponding production matrix has $A(x)=c(x)$, and begins
$$\left(
\begin{array}{ccccccc}
1& 1 & 0 & 0 & 0 & 0 & 0 \\
2 & 1 & 1 & 0 & 0 & 0 & 0 \\
5 & 2 & 1 & 1 & 0 & 0 & 0 \\
14 & 5 & 2 & 1 & 1 & 0 & 0 \\
42 & 14 & 5 & 2 & 1 & 1 & 0 \\
132 & 42 & 14 & 5 & 2 & 1 & 1 \\
429 & 132 & 42 & 14 & 5 & 2 & 1 \\
\end{array}\right).$$
In addition, we have the following product expression for the Riordan array $(t(x),xt(x))$.
$$\left(
\begin{array}{ccccccc}
1& 0 & 0 & 0 & 0 & 0 & 0 \\
1 & 1 & 0 & 0 & 0 & 0 & 0 \\
3 & 2 & 1 & 0 & 0 & 0 & 0 \\
12 & 7 & 3 & 1 & 0 & 0 & 0 \\
55 & 30 & 12 & 4 & 1 & 0 & 0 \\
273 & 143 & 55 & 18 & 5 & 1 & 0 \\
1428 & 728 & 273 & 88 & 25 & 6 & 1 \\
\end{array}\right)=$$
$$\left(
\begin{array}{ccccccc}
1& 0 & 0 & 0 & 0 & 0 & 0 \\
1 & 1 & 0 & 0 & 0 & 0 & 0 \\
3 & 4 & 1 & 0 & 0 & 0 & 0 \\
12 & 18 & 7 & 1 & 0 & 0 & 0 \\
55 &88 & 42 & 10 & 1 & 0 & 0 \\
273 & 455 & 245 & 75 & 13 & 1 & 0 \\
1428 & 2448 & 1428 & 510 & 117 & 16 & 1 \\
\end{array}\right) \cdot \left(1, \frac{x}{(1+x)^2}\right),$$ that is,
$$(t(x), xt(x))= (t(x), xt(x)^3) \cdot \left(1, \frac{x}{(1+x)^2}\right).$$
Equivalently, we have
\begin{proposition}
$$(t(x), xt(x))=\left(\frac{1}{1+x}, \frac{x}{(1+x)^3}\right)^{-1} \cdot  \left(1, \frac{x}{(1+x)^2}\right).$$
\end{proposition}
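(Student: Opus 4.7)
The plan is to reduce the claimed identity to an elementary consequence of the Riordan group product rule combined with the functional equation $t(x) = 1 + x\,t(x)^3$ satisfied by the ternary generating function $t(x) = g_3(x)$.

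First I would recall, from the earlier discussion of the case $r=3$, that
$$\left(\frac{1}{1+x}, \frac{x}{(1+x)^3}\right)^{-1} = (t(x), xt(x)^3),$$
so the right-hand side of the claimed identity is $(t(x), xt(x)^3) \cdot \left(1, \frac{x}{(1+x)^2}\right)$. Then I would apply the Riordan group multiplication rule $(g,f)\cdot(u,v) = (g \cdot u(f),\, v(f))$ with $g = t$, $f = xt^3$, $u = 1$, $v(x) = x/(1+x)^2$. This gives
$$(t(x), xt(x)^3) \cdot \left(1, \frac{x}{(1+x)^2}\right) = \left(t(x),\, \frac{xt(x)^3}{(1+xt(x)^3)^2}\right).$$

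The remaining task is to verify that $\frac{xt(x)^3}{(1+xt(x)^3)^2} = x\,t(x)$, which after cancellation is equivalent to $t(x)^2 = (1+xt(x)^3)^2$. Since $t(0) = 1 > 0$ and both sides are power series with constant term $1$, this is equivalent to $t(x) = 1 + x\,t(x)^3$, which is precisely the defining functional equation $g_r(x) = 1 + x\,g_r(x)^r$ specialised to $r=3$.

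There is no real obstacle here; the only subtlety is bookkeeping the composition $v(f)$ correctly and recognising that the simplification collapses to the generating-function identity for $t(x)$. No Lagrange inversion, no reversion calculation, and no manipulation of the explicit entries of the matrix is required.
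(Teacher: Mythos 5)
Your proposal is correct and follows essentially the same route as the paper: identify the right-hand side as $(t(x), xt(x)^3)\cdot\left(1, \frac{x}{(1+x)^2}\right)$, apply the Riordan product rule to get $\left(t(x), \frac{xt(x)^3}{(1+xt(x)^3)^2}\right)$, and simplify using $1+xt(x)^3=t(x)$. The paper substitutes the functional equation directly into the denominator rather than passing through the squared identity, but this is only a cosmetic difference.
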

\begin{proof}
We have
\begin{align*}
(t(x), xt(x)^3)\cdot \left(1,\frac{x}{(1+x)^2}\right)&=\left(t(x), \frac{xt(x)^3}{(1+xt(x)^3)^2}\right)\\
&=\left(t(x), \frac{xt(x)^3}{t(x)^2}\right)\\
&=(t(x), xt(x)).\end{align*}
\end{proof}
We call $(t(x), xt(x))$ the Fuss-Catalan-Riordan matrix for $r=3$, and we consider the product of matrices above the canonical factorization of this matrix.

For the case $r=4$, we consider the product
$$\left(
\begin{array}{ccccccc}
1& 0 & 0 & 0 & 0 & 0 & 0 \\
1 & 1 & 0 & 0 & 0 & 0 & 0 \\
4 & 5 & 1 & 0 & 0 & 0 & 0 \\
22 & 30 & 9 & 1 & 0 & 0 & 0 \\
140 & 200 & 72 & 13 & 1 & 0 & 0 \\
969 & 1425 & 570 & 130 & 17 & 1 & 0 \\
7084 & 10626 & 4554 & 1196 & 204 & 21 & 1 \\
\end{array}\right)\cdot \mathbf{B}^T$$

$$=\left(
\begin{array}{ccccccc}
1 &1 & 1 & 1 & 1 & 1 & 1 \\
1 & 2 & 3 & 4 & 5 & 6 & 7 \\
4 & 9 & 15 & 22 & 30 & 39 & 49 \\
22 & 52 & 91 & 140 & 200 & 272 & 357 \\
140 & 340 & 612 & 969 & 1425 & 1995 & 2695 \\
969 & 2394 & 4389 & 7084 & 10626 & 15180 & 20930 \\
7084 & 17710 &7752 & 32890 & 81900 & 118755 & 166257 \\
\end{array}\right).$$
This is the Fuss-Catalan matrix for $r=4$. If we downshift this square matrix to get the triangle that begins
$$\left(
\begin{array}{ccccccc}
1& 0 & 0 & 0 & 0 & 0 & 0 \\
1 & 1 & 0 & 0 & 0 & 0 & 0 \\
4 & 2 & 1 & 0 & 0 & 0 & 0 \\
22 & 9 & 3 & 1 & 0 & 0 & 0 \\
140 & 52 & 15 & 4 & 1 & 0 & 0 \\
969 & 340 & 91 & 22 & 5 & 1 & 0 \\
7084 & 2394 & 612 & 140 & 30 & 6 & 1 \\
\end{array}\right).$$  This is the Riordan array $(q(x), xq(x))$. The corresponding production matrix has $A(x)=t(x)$, and begins
$$\left(
\begin{array}{ccccccc}
1& 1 & 0 & 0 & 0 & 0 & 0 \\
3 & 1 & 1 & 0 & 0 & 0 & 0 \\
12 & 3 & 1 & 1 & 0 & 0 & 0 \\
55 & 12 & 3 & 1 & 1 & 0 & 0 \\
273 & 55 & 12 & 3 & 1 & 1 & 0 \\
1428 & 273 & 55 & 12 & 3 & 1 & 1 \\
7752 & 1428 & 273 & 55 & 12 & 3 & 1 \\
\end{array}\right).$$
Then we have
$$\left(
\begin{array}{ccccccc}
1& 0 & 0 & 0 & 0 & 0 & 0 \\
1 & 1 & 0 & 0 & 0 & 0 & 0 \\
4 & 2 & 1 & 0 & 0 & 0 & 0 \\
22 & 9 & 3 & 1 & 0 & 0 & 0 \\
140 & 52 & 15 & 4 & 1 & 0 & 0 \\
969 & 340 & 91 & 22 & 5 & 1 & 0 \\
7084 & 2394 & 612 & 140 & 30 & 6 & 1 \\
\end{array}\right)$$
$$=\left(
\begin{array}{ccccccc}
1& 0 & 0 & 0 & 0 & 0 & 0 \\
1 & 1 & 0 & 0 & 0 & 0 & 0 \\
4 & 5 & 1 & 0 & 0 & 0 & 0 \\
22 & 30 & 9 & 1 & 0 & 0 & 0 \\
140 & 200 & 72 & 13 & 1 & 0 & 0 \\
969 & 1425 & 570 & 130 & 17 & 1 & 0 \\
7084 & 10626 & 4554 & 1196 & 204 & 21 & 1 \\
\end{array}\right) \cdot \left(1, \frac{x}{(1+x)^3}\right).$$ That is,
$$(q(x), xq(x))= (q(x), x q(x)^4) \cdot  \left(1, \frac{x}{(1+x)^3}\right).$$
Equivalently, we have
$$(q(x), xq(x))=\left(\frac{1}{1+x}, \frac{x}{(1+x)^4}\right)^{-1} \cdot  \left(1, \frac{x}{(1+x)^3}\right).$$ We state this as a proposition.
Equivalently, we have
\begin{proposition}
$$(q(x), xq(x))=\left(\frac{1}{1+x}, \frac{x}{(1+x)^4}\right)^{-1} \cdot  \left(1, \frac{x}{(1+x)^3}\right).$$
\end{proposition}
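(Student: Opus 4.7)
The plan is to mirror the argument already used for the $r=3$ case, using the fact that $q(x)$ satisfies the defining functional equation $q(x) = 1 + xq(x)^4$ (i.e., $q = g_4$). First I would rewrite the right-hand side in the more convenient form $(q(x), xq(x)^4) \cdot \left(1, \frac{x}{(1+x)^3}\right)$, using the identification
$$\left(\frac{1}{1+x}, \frac{x}{(1+x)^4}\right)^{-1} = (q(x), xq(x)^4),$$
which comes from the general Riordan-inversion formula together with $\mathrm{Rev}\{x/(1+x)^4\} = xq(x)^4$ (a direct consequence of Proposition 3 with $r=4$).

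Next, I would apply the Riordan array multiplication rule $(g, f) \cdot (u, v) = (g \cdot u(f), v(f))$ to the product on the right. With $g(x) = q(x)$, $f(x) = xq(x)^4$, $u(x) = 1$, and $v(x) = \frac{x}{(1+x)^3}$, this gives
$$(q(x), xq(x)^4) \cdot \left(1, \frac{x}{(1+x)^3}\right) = \left(q(x),\; \frac{xq(x)^4}{(1 + xq(x)^4)^3}\right).$$

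The key step is then to simplify the second component using the defining equation. Since $q(x) = 1 + xq(x)^4$, we have $1 + xq(x)^4 = q(x)$, hence $(1+xq(x)^4)^3 = q(x)^3$, and therefore
$$\frac{xq(x)^4}{(1+xq(x)^4)^3} = \frac{xq(x)^4}{q(x)^3} = xq(x).$$
This yields exactly $(q(x), xq(x))$, completing the proof.

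I do not expect any serious obstacle here: the argument is a direct analogue of the $r=3$ proposition already established, with $t(x)$ replaced by $q(x)$ and the exponents shifted accordingly. The only thing to be careful about is the consistent use of the functional equation $q(x) = 1 + xq(x)^4$, i.e.\ the relation $g_r(x) = 1 + x g_r(x)^r$ specialised to $r=4$.
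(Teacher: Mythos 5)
Your proposal is correct and matches the paper's own proof essentially step for step: both identify $\left(\frac{1}{1+x}, \frac{x}{(1+x)^4}\right)^{-1}$ with $(q(x), xq(x)^4)$, apply the Riordan product rule, and use the functional equation $q(x)=1+xq(x)^4$ to collapse $\frac{xq(x)^4}{(1+xq(x)^4)^3}$ to $xq(x)$. No issues.
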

\begin{proof}
We have
\begin{align*}
(q(x), xq(x)^4)\cdot \left(1,\frac{x}{(1+x)^3}\right)&=\left(q(x), \frac{xq(x)^4}{(1+xq(x)^4)^3}\right)\\
&=\left(q(x), \frac{xq(x)^4}{q(x)^3}\right)\\
&=(q(x), xq(x)).\end{align*}
\end{proof}
We call $(q(x), xq(x))$ the Fuss-Catalan-Riordan matrix for $r=4$, and we consider the product of matrices above the canonical factorization of this matrix.

In general, we have the following result.
\begin{proposition} Let $g_r(x)=1+xg_r(x)^r$. Then
$$(g_r(x), x g_r(x))=(g_r(x), x g_r(x)^r)\cdot \left(1, \frac{x}{(1+x)^{r-1}}\right).$$
\end{proposition}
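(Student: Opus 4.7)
The plan is to verify the identity directly by applying the Riordan group multiplication rule and simplifying using the defining functional equation $g_r(x) = 1 + x g_r(x)^r$. This is the exact same strategy used in the two preceding propositions (the $r=3$ and $r=4$ cases), so the only task is to confirm it still works with a general $r$.

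Concretely, I would first invoke the product formula
\[
(g(x), f(x)) \cdot (u(x), v(x)) = \bigl(g(x)\, u(f(x)),\, v(f(x))\bigr)
\]
with $(g,f) = (g_r(x), xg_r(x)^r)$ and $(u,v) = \bigl(1, x/(1+x)^{r-1}\bigr)$. Since $u \equiv 1$, the first component of the product is just $g_r(x)$, which already matches the first component of $(g_r(x), xg_r(x))$. The entire content of the proposition is therefore the identity
\[
v(f(x)) = \frac{xg_r(x)^r}{\bigl(1 + xg_r(x)^r\bigr)^{r-1}} = xg_r(x).
\]

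The second step is to use the defining equation $g_r(x) = 1 + xg_r(x)^r$, which rewrites the denominator as $g_r(x)^{r-1}$. Then
\[
\frac{xg_r(x)^r}{g_r(x)^{r-1}} = xg_r(x),
\]
completing the verification. I do not anticipate any real obstacle: the only ``trick'' is recognizing $1 + xg_r(x)^r$ as $g_r(x)$, but this is immediate from the functional equation. The proof should be a direct three-line computation mirroring the special cases worked out above.

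\begin{proof}
We have
\begin{align*}
(g_r(x), xg_r(x)^r)\cdot \left(1, \frac{x}{(1+x)^{r-1}}\right) &= \left(g_r(x), \frac{xg_r(x)^r}{(1+xg_r(x)^r)^{r-1}}\right)\\
&= \left(g_r(x), \frac{xg_r(x)^r}{g_r(x)^{r-1}}\right)\\
&= (g_r(x), xg_r(x)),
\end{align*}
where the second equality uses the defining identity $g_r(x) = 1 + xg_r(x)^r$.
\end{proof}
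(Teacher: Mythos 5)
Your proof is correct and follows exactly the same route as the paper's: apply the Riordan product rule, then use $1+xg_r(x)^r = g_r(x)$ to collapse the denominator. The computation matches the paper's proof line for line.
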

\begin{proof}
We have
\begin{align*}
(g_r(x), x g_r(x)^r)\cdot \left(1, \frac{x}{(1+x)^{r-1}}\right)&=\left(g_r(x), \frac{xg_r(x)^r}{(1+xg_r(x)^r)^{r-1}}\right)\\
&=\left(g_r(x), \frac{xg_r(x)^r}{g_r(x)^{r-1}}\right)\\
&=(g_r(x), x g_r(x)).\end{align*}
\end{proof}
We can use this result to say something about the $A$-sequence of the Riordan array $\left(1, \frac{x}{(1+x)^{r-1}}\right)$. If we denote this by $A(x)$, then we have
$$\left(A(x), \frac{x}{A(x)}\right)\cdot (1+x)^r=g_{r-1}(x).$$ Equivalently, we have
$$A(x)\left(1+\frac{x}{A(x)}\right)^r = g_{r-1}(x).$$
In general, we have that the Fuss-Catalan numbers are the result of multiplying the Riordan array $\left(\frac{1}{1+x}, \frac{x}{(1+x)^{r+1}}\right)^{-1}$ by $\mathbf{B}^T$. Thus the general term of the Fuss-Catalan matrix can be expressed as
$$\sum_{j=0}^n \frac{rj+1}{(r-1)n+j+1} \binom{rn}{n-j}\binom{k}{j}.$$
\begin{proposition} The Fuss-Catalan matrix $(g_r(x), g_r(x))$ is the result of multiplying the Riordan array $(g_r(x), x g_r(x)^r)=\left(\frac{1}{1+x}, \frac{x}{(1+x)^r}\right)^{-1}$ on the right by the transpose of the binomial matrix $\mathbf{B}$.
\end{proposition}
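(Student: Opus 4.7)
The plan is to show the two matrices agree column by column in terms of their generating functions. First, recall that the Fuss-Catalan matrix $(g_r(x), g_r(x))$ is, by the definition in the excerpt, the rectification of the Bell matrix $(g_r(x), xg_r(x))$. Its bivariate generating function is $\frac{g_r(x)}{1 - y g_r(x)}$, so the generating function of its $k$-th column is simply $g_r(x) \cdot g_r(x)^k = g_r(x)^{k+1}$. My task is therefore to verify that the $k$-th column of $(g_r(x), xg_r(x)^r) \cdot \mathbf{B}^T$ also has generating function $g_r(x)^{k+1}$.

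Next, I would write out the matrix product explicitly. Since $(\mathbf{B}^T)_{j,k} = \binom{k}{j}$ (which vanishes for $j > k$, so there are no convergence issues), the $(n,k)$-entry of the product is
\[
\sum_{j=0}^{k} \binom{k}{j}\, [x^n]\, g_r(x) \bigl(xg_r(x)^r\bigr)^j.
\]
Pulling the $[x^n]$ out of the finite sum, the generating function of the $k$-th column of the product is
\[
g_r(x) \sum_{j=0}^{k} \binom{k}{j}\bigl(xg_r(x)^r\bigr)^j \;=\; g_r(x)\bigl(1 + xg_r(x)^r\bigr)^k.
\]

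The last step is to invoke the defining functional equation $g_r(x) = 1 + xg_r(x)^r$, which collapses $\bigl(1 + xg_r(x)^r\bigr)^k$ to $g_r(x)^k$ and yields $g_r(x)^{k+1}$, matching the column generating function of the rectification. Equality of every column entails equality of the matrices, which is the claim.

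I do not expect any serious obstacle here. The only points that require a moment of care are (i) that $\mathbf{B}^T$ is upper-triangular with binomial entries, so the sum defining the product is finite in $j$ for each fixed $k$ and matrix multiplication is unambiguous, and (ii) that the explicit general term expression $\sum_{j=0}^{n}\frac{rj+1}{(r-1)n+j+1}\binom{rn}{n-j}\binom{k}{j}$ displayed just before the proposition serves as an alternative sanity-check for the $(n,k)$-entry by substituting the earlier formula $\tau_{n,j} = \frac{rj+1}{(r-1)n+j+1}\binom{rn}{n-j}$, but no new computation is required for the proof itself.
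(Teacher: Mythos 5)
Your proof is correct and is essentially the paper's argument: the paper carries out the same computation with the bivariate generating function, showing $\frac{1}{1-y}\cdot\frac{g_r(x)}{1-\frac{y}{1-y}xg_r(x)^r}=\frac{g_r(x)}{1-yg_r(x)}$, whereas you extract the coefficient of $y^k$ and apply the binomial theorem column by column. In both versions the whole proof hinges on the same single identity $1+xg_r(x)^r=g_r(x)$, so the two are just different bookkeeping for the identical calculation.
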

\begin{proof} The bivariate generating function of $(g_r(x), x g_r(x)^r)$ is given by
$$\frac{g(x)}{1-yxg_r(x)^r}.$$ Multiplying on the right by $\left(\frac{1}{1-y}, \frac{y}{1-y}\right)^T$ gives us
\begin{align*}\frac{1}{1-y} \frac{g_r(x)}{1-\frac{y}{1-y}xg_r(x)^r}&=\frac{g_r(x)}{1-y-yxg_r(x)^r}\\
&=\frac{g_r(x)}{1-y(1+xg_r(x)^r)}\\
&=\frac{g_r(x)}{1-yg_r(x)}.\end{align*}
This is the generating function of $(g_r(x), g_r(x))$ as required.
\end{proof}

\section{More on the general case}

We have defined the $r$-th Fuss-Catalan-Riordan array to be the Riordan array $(g_r(x), xg_r(x))$, and we have defined the Fuss-Catalan matrix to be the rectification of this Riordan array. The first five Fuss-Catalan-Riordan arrays begin as follows.
$$\left(
\begin{array}{ccccc}
1& 0 & 0 & 0 & 0 \\
1 & 1 & 0 & 0 & 0 \\
0 & 2 & 1 & 0 & 0 \\
0 & 1 & 3 & 1 & 0  \\
0 & 0 & 3 & 4 & 1 \\
\end{array}\right),\left(
\begin{array}{ccccc}
1& 0 & 0 & 0 & 0 \\
1 & 1 & 0 & 0 & 0 \\
1 & 2 & 1 & 0 & 0 \\
1 & 3 & 3 & 1 & 0  \\
1 &4 & 6 & 4 & 1 \\
\end{array}\right),\left(
\begin{array}{ccccc}
1& 0 & 0 & 0 & 0 \\
1 & 1 & 0 & 0 & 0 \\
2 & 2 & 1 & 0 & 0 \\
5 & 5 & 3 & 1 & 0  \\
14 &14 & 9 & 4 & 1 \\
\end{array}\right),\left(
\begin{array}{ccccc}
1& 0 & 0 & 0 & 0 \\
1 & 1 & 0 & 0 & 0 \\
3 & 2 & 1 & 0 & 0 \\
12 & 7 & 3 & 1 & 0  \\
55 &30 & 12 & 4 & 1 \\
\end{array}\right),$$
$$\left(
\begin{array}{ccccc}
1& 0 & 0 & 0 & 0 \\
1 & 1 & 0 & 0 & 0 \\
4 & 2 & 1 & 0 & 0 \\
22 & 9 & 3 & 1 & 0  \\
140 &52 & 15 & 4 & 1 \\
\end{array}\right).$$ Their respective production matrices are then as follows.
$$\left(
\begin{array}{ccccc}
1&  1 & 0 & 0 & 0 \\
-1 & 1 & 1 & 0 & 0 \\
2 & -1 & 1 & 1 & 0 \\
-5 & 2 & -1 & 1 & 1  \\
14 & -5 & 2 & -1 & 1 \\
\end{array}\right),
\left(
\begin{array}{ccccc}
1&  1 & 0 & 0 & 0 \\
0 & 1 & 1 & 0 & 0 \\
0 & 0 & 1 & 1 & 0 \\
0 & 0 & 0 & 1 & 1  \\
0 & 0 & 0 & 0 & 1 \\
\end{array}\right),\left(
\begin{array}{ccccc}
1& 1 & 0 & 0 & 0 \\
1 & 1 & 1 & 0 & 0 \\
1 & 1 & 1 & 1 & 0 \\
1 & 1 & 1 & 1 & 1  \\
1 & 1 & 1 & 1 & 1 \\
\end{array}\right),\left(
\begin{array}{ccccc}
1& 1 & 0 & 0 & 0 \\
2 & 1 & 1 & 0 & 0 \\
5 & 2 & 1 & 1 & 0 \\
14 & 5 & 2 & 1 & 1  \\
42 &14 & 5 & 2 & 1 \\
\end{array}\right),$$
$$\left(
\begin{array}{ccccc}
1& 1 & 0 & 0 & 0 \\
3 & 1 & 1 & 0 & 0 \\
12 & 3 & 1 & 1 & 0 \\
55 & 12 & 3 & 1 & 1  \\
273 &55 & 12 & 3 & 1 \\
\end{array}\right).$$
The general term of the Fuss-Catalan-Riordan array is given by
$$FCR(n,k;r)=\frac{k+1}{r(n-k)+k+1}\binom{r(n-k)+k+1}{n-k}.$$
The rectification of this, that is, the Fuss-Catalan matrix, is then given by
$$FC(n,k;r)=\frac{k+1}{rn+k+1}\binom{rn+k+1}{n}.$$
The first five matrices $(g_r(x), xg_r(x)^r)$ begin
\begin{scriptsize}
$$\left(
\begin{array}{ccccc}
1& 0 & 0 & 0 & 0 \\
1 & 1 & 0 & 0 & 0 \\
0 & 1 & 1 & 0 & 0 \\
0 & 0 & 1 & 1 & 0  \\
0 & 0 & 0 & 1 & 1 \\
\end{array}\right),
\left(
\begin{array}{ccccc}
1& 0 & 0 & 0 & 0 \\
1 & 1 & 0 & 0 & 0 \\
1 & 2 & 1 & 0 & 0 \\
1 & 3 & 3 & 1 & 0  \\
1 & 4 & 6 & 4 & 1 \\
\end{array}\right),
\left(
\begin{array}{ccccc}
1& 0 & 0 & 0 & 0 \\
1 & 1 & 0 & 0 & 0 \\
2 & 3 & 1 & 0 & 0 \\
5 & 9 & 5 & 1 & 0  \\
14 & 28 & 20 & 7 & 1 \\
\end{array}\right),
\left(
\begin{array}{ccccc}
1& 0 & 0 & 0 & 0 \\
1 & 1 & 0 & 0 & 0 \\
3 & 4 & 1 & 0 & 0 \\
12 & 18 & 7 & 1 & 0  \\
55 & 88 & 42 & 10 & 1 \\
\end{array}\right),
\left(
\begin{array}{ccccc}
1& 0 & 0 & 0 & 0 \\
1 & 1 & 0 & 0 & 0 \\
4 & 5 & 1 & 0 & 0 \\
22 & 10 & 9 & 1 & 0  \\
140 & 20 & 72 & 13 & 1 \\
\end{array}\right).$$
\end{scriptsize}
Their respective production matrices are then as follows.
\begin{scriptsize}
$$\left(
\begin{array}{ccccc}
1&   1 & 0 &  0 & 0 \\
-1 & 0 & 1 &  0 & 0 \\
1 &  0 & 0 &  1 & 0 \\
-1 & 0 & 0 &  0 & 1  \\
1 & 0 & 0 &  0 & 0 \\
\end{array}\right),
\left(
\begin{array}{ccccc}
1&   1 & 0 &  0 & 0 \\
0 & 1 & 1 &  0 & 0 \\
0 &  0 & 1 &  1 & 0 \\
0 & 0 & 0 &  1 & 1  \\
0 & 0 & 0 &  0 & 1 \\
\end{array}\right),
\left(
\begin{array}{ccccc}
1&   1 & 0 &  0 & 0 \\
1 & 2 & 1 &  0 & 0 \\
0 &  1 & 2 &  1 & 0 \\
0 & 0 & 1 &  2 & 1  \\
0 & 0 & 0 &  1 & 2 \\
\end{array}\right),\left(
\begin{array}{ccccc}
1&   1 & 0 &  0 & 0 \\
2 & 3 & 1 &  0 & 0 \\
1 &  3 & 3 &  1 & 0 \\
0 & 1 & 3 &  3 & 1  \\
0 & 0 & 1 &  3 & 3 \\
\end{array}\right),
\left(
\begin{array}{ccccc}
1&   1 & 0 &  0 & 0 \\
3 & 4 & 1 &  0 & 0 \\
3 & 6 & 4 &  1 & 0 \\
1 & 4 & 6 &  4 & 1  \\
0 & 1 & 4 &  6 & 4 \\
\end{array}\right).$$
\end{scriptsize}
We see that apart from the first matrix, these are banded matrices, indicating the presence of $d$-orthogonality.

\section{Lattice paths}
We consider the case of the ternary numbers $t_n=\frac{1}{2n+1}\binom{3n}{n}$.
The ternary numbers $t_n$ \seqnum{A001764},  correspond to lattice paths from $(0,0)$ to $(n,0)$ with step set $\{(1,1), (-1,-2)\}$. This can be seen because we have
$$\frac{u}{x}=1+\frac{u^3}{x^2},$$ or, equivalently,
$$g=1+xg^3,$$ where $g(x)=\frac{u}{x}$. The corresponding left factor matrix is the ternary matrix $(t(x), xt(x))$ \seqnum{A110616} that begins
$$\left(
\begin{array}{ccccccc}
 1 & 0 & 0 & 0 & 0 & 0 & 0 \\
 1 & 1 & 0 & 0 & 0 & 0 & 0 \\
 3 & 2 & 1 & 0 & 0 & 0 & 0 \\
12 & 7 & 3 & 1 & 0 & 0 & 0 \\
55 & 30 & 12 & 4 & 1 & 0 & 0 \\
273 & 143 & 55 & 18 & 5 & 1 & 0 \\
1428 & 728 & 273 & 88 & 25 & 6 & 1 \\
\end{array}\right),$$
where
$$t(x)=\frac{2}{\sqrt{3x}} \sin\left(\frac{1}{3}\sin^{-1}\left(\frac{\sqrt{27x}}{2}\right)\right)$$ is the generating function of ternary numbers. The ternary matrix (see \textbf{Figure} \ref{Ter}) satisfies the recurrence
$$t_{n,k}=t_{n-1,k-1}+t_{n+1,k+2},$$ where $t_{n,k}=0$ if $n<0$ or $k<0$ or $k>n$, and $t_{0,0}=1$. The corresponding Fuss-Catalan matrix for $r=3$ corresponds to the recurrence 
$$t_{n,k}=t_{n,k-1}+t_{n-1,k+2}.$$ This counts lattice paths form $(0,0)$ to $(n,k)$ with step set $\{(0,1), (1,-2)\}$. See \textbf{Figure} \ref{TerM}.

The quaternary numbers $q_n=\frac{1}{3n+1}\binom{4n}{n}$ arise from the relation
$$\frac{u}{x}=1+\frac{u^4}{x^3},$$ or, equivalently,
$$g=1+xg^4.$$ These then correspond to lattice paths from $(0,0)$ to $(n,0)$ with step set $\{(1,1), (-2,-3)\}$, and the left-factor matrix for these paths is the matrix $(q(x), xq(x))$.

In general, the Fuss-Catalan-Riordan array is the lattice path matrix for the step set $\{(1,1), (2-r,1-r)\}$. Interpretations in terms of path pairs are given in \cite{Drube}. Similarly, the Fuss-Catalan matrices correspond to lattice paths with step set $\{(0,1), (1,1-r)\}$. 
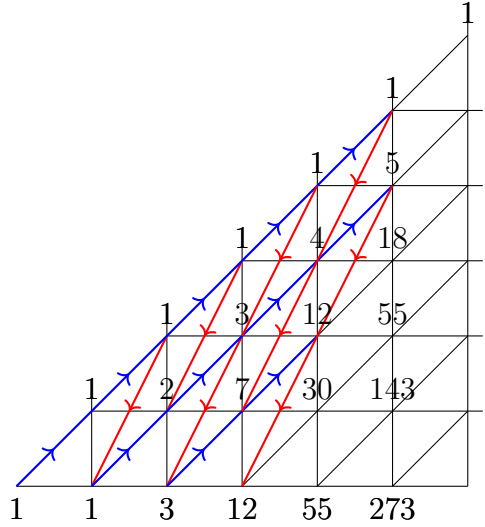
\begin{figure}
\begin{center}
\begin{tikzpicture}
\draw(0,0)--(6,0);
\draw(0,0)--(6,6);
\draw(1,0)--(1,1.2);
\draw(2,0)--(2,2.2);
\draw(3,0)--(3,3.2);
\draw(4,0)--(4,4.2);
\draw(5,0)--(5,5.2);
\draw(6,0)--(6,6.2);

\draw(1,1)--(6.2,1);
\draw(2,2)--(6.2,2);
\draw(3,3)--(6.2,3);
\draw(4,4)--(6.2,4);
\draw(5,5)--(6.2,5);

\draw(1,0)--(6,5);
\draw(2,0)--(6,4);
\draw(3,0)--(6,3);
\draw(4,0)--(6,2);
\draw(5,0)--(6,1);

\draw[red, thick, postaction={decorate},
      decoration={markings, mark=at position 0.5 with {\arrow{>}}}]
      (2,2) -- (1,0);

\draw[red, thick, postaction={decorate},
      decoration={markings, mark=at position 0.5 with {\arrow{>}}}]
      (3,3) -- (2,1);

\draw[red, thick, postaction={decorate},
      decoration={markings, mark=at position 0.5 with {\arrow{>}}}]
      (3,2) -- (2,0);

\draw[blue, thick, postaction={decorate},
      decoration={markings, mark=at position 0.5 with {\arrow{>}}}]
      (0,0) -- (1,1);

\draw[blue, thick, postaction={decorate},
      decoration={markings, mark=at position 0.5 with {\arrow{>}}}]
      (1,1) -- (2,2);

\draw[blue, thick, postaction={decorate},
      decoration={markings, mark=at position 0.5 with {\arrow{>}}}]
      (2,2) -- (3,3);

\draw[blue, thick, postaction={decorate},
      decoration={markings, mark=at position 0.5 with {\arrow{>}}}]
      (2,1) -- (3,2);

\draw[blue, thick, postaction={decorate},
      decoration={markings, mark=at position 0.5 with {\arrow{>}}}]
      (1,0) -- (2,1);

\draw[blue, thick, postaction={decorate},
      decoration={markings, mark=at position 0.5 with {\arrow{>}}}]
      (3,1) -- (4,2);

\draw[blue, thick, postaction={decorate},
      decoration={markings, mark=at position 0.5 with {\arrow{>}}}]
      (2,0) -- (3,1);

\draw[blue, thick, postaction={decorate},
      decoration={markings, mark=at position 0.5 with {\arrow{>}}}]
      (3,2) -- (4,3);

\draw[blue, thick, postaction={decorate},
      decoration={markings, mark=at position 0.5 with {\arrow{>}}}]
      (4,3) -- (5,4);

\draw[blue, thick, postaction={decorate},
      decoration={markings, mark=at position 0.5 with {\arrow{>}}}]
      (3,3) -- (4,4);

\draw[blue, thick, postaction={decorate},
      decoration={markings, mark=at position 0.5 with {\arrow{>}}}]
      (4,4) -- (5,5);

\draw[red, thick, postaction={decorate},
      decoration={markings, mark=at position 0.5 with {\arrow{>}}}]
      (5,4) -- (4,2);

\draw[red, thick, postaction={decorate},
      decoration={markings, mark=at position 0.5 with {\arrow{>}}}]
      (4,2) -- (3,0);

\draw[red, thick, postaction={decorate},
      decoration={markings, mark=at position 0.5 with {\arrow{>}}}]
      (4,4) -- (3,2);

\draw[red, thick, postaction={decorate},
      decoration={markings, mark=at position 0.5 with {\arrow{>}}}]
      (5,5) -- (4,3);

\draw[red, thick, postaction={decorate},
      decoration={markings, mark=at position 0.5 with {\arrow{>}}}]
      (4,3) -- (3,1);

\node at (0,-.3) {1};
\node at (1,1.3) {1};
\node at (2,2.3) {1};
\node at (3,3.3) {1};
\node at (4,4.3) {1};
\node at (5,5.3) {1};
\node at (6,6.3) {1};

\node at (0,-.3) {1};
\node at (1,-.3) {1};
\node at (2,-.3) {3};
\node at (3,-.3) {12};
\node at (4,-.3) {55};
\node at (5,-.3) {273};

\node at (1,1.3) {1};
\node at (2,2.3) {1};
\node at (3,3.3) {1};
\node at (4,4.3) {1};
\node at (5,5.3) {1};
\node at (6,6.3) {1};

\node at (1,-.3) {1};
\node at (2,-.3) {3};
\node at (3,-.3) {12};
\node at (4,-.3) {55};
\node at (5,-.3) {273};

\node at (2,1.3) {2};

\node at (3,1.3) {7};
\node at (3,2.3) {3};

\node at (4,1.3) {30};
\node at (4,2.3) {12};
\node at (4,3.3) {4};

\node at (5,1.3) {143};
\node at (5,2.3) {55};
\node at (5,3.3) {18};
\node at (5,4.3) {5};
\end{tikzpicture}
\end{center}
\caption{The Fuss-Catalan-Riordan array (for $r=3$) $(t(x), xt(x))$ as a path matrix}\label{Ter}
\end{figure}

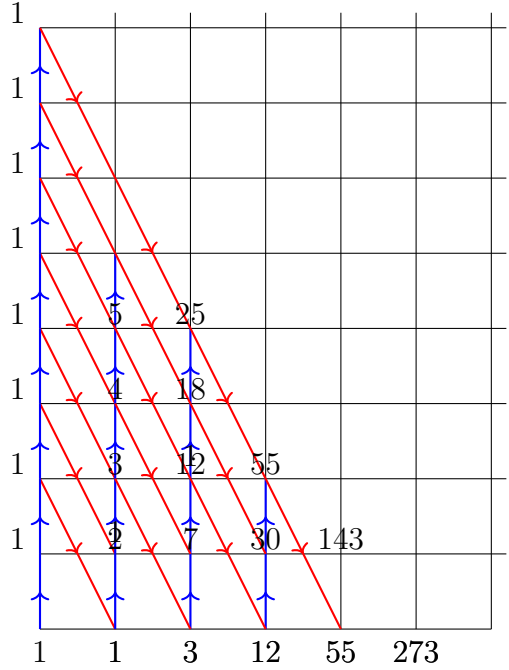
\begin{figure}
\begin{center}
\begin{tikzpicture}
\draw(0,0)--(0,8);
\draw(0,0)--(6,0);
\draw(1,0)--(1,8.2);
\draw(2,0)--(2,8.2);
\draw(3,0)--(3,8.2);
\draw(4,0)--(4,8.2);
\draw(5,0)--(5,8.2);
\draw(6,0)--(6,8.2);

\draw(0,1)--(6.2,1);
\draw(0,2)--(6.2,2);
\draw(0,3)--(6.2,3);
\draw(0,4)--(6.2,4);
\draw(0,5)--(6.2,5);
\draw(0,6)--(6.2,6);
\draw(0,7)--(6.2,7);
\draw(0,8)--(6.2,8);

\draw[red, thick, postaction={decorate},
      decoration={markings, mark=at position 0.5 with {\arrow{>}}}]
      (0,2) -- (1,0);

\draw[red, thick, postaction={decorate},
      decoration={markings, mark=at position 0.5 with {\arrow{>}}}]
      (0,3) -- (1,1);

\draw[red, thick, postaction={decorate},
      decoration={markings, mark=at position 0.5 with {\arrow{>}}}]
      (0,4) -- (1,2);

\draw[red, thick, postaction={decorate},
      decoration={markings, mark=at position 0.5 with {\arrow{>}}}]
      (0,5) -- (1,3);

\draw[blue, thick, postaction={decorate},
      decoration={markings, mark=at position 0.5 with {\arrow{>}}}]
      (0,0) -- (0,1);

\draw[blue, thick, postaction={decorate},
      decoration={markings, mark=at position 0.5 with {\arrow{>}}}]
      (0,1) -- (0,2);

\draw[blue, thick, postaction={decorate},
      decoration={markings, mark=at position 0.5 with {\arrow{>}}}]
      (0,2) -- (0,3);

\draw[blue, thick, postaction={decorate},
      decoration={markings, mark=at position 0.5 with {\arrow{>}}}]
      (0,3) -- (0,4);

\draw[blue, thick, postaction={decorate},
      decoration={markings, mark=at position 0.5 with {\arrow{>}}}]
      (0,4) -- (0,5);

\draw[blue, thick, postaction={decorate},
      decoration={markings, mark=at position 0.5 with {\arrow{>}}}]
      (0,5) -- (0,6);

\draw[blue, thick, postaction={decorate},
      decoration={markings, mark=at position 0.5 with {\arrow{>}}}]
      (0,6) -- (0,7);

\draw[blue, thick, postaction={decorate},
      decoration={markings, mark=at position 0.5 with {\arrow{>}}}]
      (0,7) -- (0,8);

\draw[blue, thick, postaction={decorate},
      decoration={markings, mark=at position 0.5 with {\arrow{>}}}]
      (1,1) -- (1,2);

\draw[blue, thick, postaction={decorate},
      decoration={markings, mark=at position 0.5 with {\arrow{>}}}]
      (1,0) -- (1,1);

\draw[blue, thick, postaction={decorate},
      decoration={markings, mark=at position 0.5 with {\arrow{>}}}]
      (1,2) -- (1,3);

\draw[blue, thick, postaction={decorate},
      decoration={markings, mark=at position 0.5 with {\arrow{>}}}]
      (1,3) -- (1,4);

\draw[blue, thick, postaction={decorate},
      decoration={markings, mark=at position 0.5 with {\arrow{>}}}]
      (1,4) -- (1,5);

\draw[blue, thick, postaction={decorate},
      decoration={markings, mark=at position 0.5 with {\arrow{>}}}]
      (2,0) -- (2,1);

\draw[blue, thick, postaction={decorate},
      decoration={markings, mark=at position 0.5 with {\arrow{>}}}]
      (2,1) -- (2,2);

\draw[blue, thick, postaction={decorate},
      decoration={markings, mark=at position 0.5 with {\arrow{>}}}]
      (2,2) -- (2,3);

\draw[blue, thick, postaction={decorate},
      decoration={markings, mark=at position 0.5 with {\arrow{>}}}]
      (2,3) -- (2,4);

\draw[blue, thick, postaction={decorate},
      decoration={markings, mark=at position 0.5 with {\arrow{>}}}]
      (3,0) -- (3,1);

 \draw[blue, thick, postaction={decorate},
      decoration={markings, mark=at position 0.5 with {\arrow{>}}}]
      (3,1) -- (3,2);

\draw[red, thick, postaction={decorate},
      decoration={markings, mark=at position 0.5 with {\arrow{>}}}]
      (1,2) -- (2,0);

\draw[red, thick, postaction={decorate},
      decoration={markings, mark=at position 0.5 with {\arrow{>}}}]
      (1,3) -- (2,1);

\draw[red, thick, postaction={decorate},
      decoration={markings, mark=at position 0.5 with {\arrow{>}}}]
      (1,4) -- (2,2);

\draw[red, thick, postaction={decorate},
      decoration={markings, mark=at position 0.5 with {\arrow{>}}}]
      (2,2) -- (3,0);

\draw[red, thick, postaction={decorate},
      decoration={markings, mark=at position 0.5 with {\arrow{>}}}]
      (2,3) -- (3,1);

\draw[red, thick, postaction={decorate},
      decoration={markings, mark=at position 0.5 with {\arrow{>}}}]
      (3,2) -- (4,0);

\draw[red, thick, postaction={decorate},
      decoration={markings, mark=at position 0.5 with {\arrow{>}}}]
      (2,4) -- (3,2);

\draw[red, thick, postaction={decorate},
      decoration={markings, mark=at position 0.5 with {\arrow{>}}}]
      (1,6) -- (2,4);

\draw[red, thick, postaction={decorate},
      decoration={markings, mark=at position 0.5 with {\arrow{>}}}]
      (0,8) -- (1,6);

\draw[red, thick, postaction={decorate},
      decoration={markings, mark=at position 0.5 with {\arrow{>}}}]
      (1,5) -- (2,3);

\draw[red, thick, postaction={decorate},
      decoration={markings, mark=at position 0.5 with {\arrow{>}}}]
      (0,7) -- (1,5);

\draw[red, thick, postaction={decorate},
      decoration={markings, mark=at position 0.5 with {\arrow{>}}}]
      (0,6) -- (1,4);


\node at (-.3,1.2) {1};
\node at (-.3,2.2) {1};
\node at (-.3,3.2) {1};
\node at (-.3,4.2) {1};
\node at (-.3,5.2) {1};
\node at (-.3,6.2) {1};
\node at (-.3,7.2) {1};
\node at (-.3,8.2) {1};

\node at (1,-.3) {1};
\node at (2,-.3) {3};
\node at (3,-.3) {12};
\node at (4,-.3) {55};
\node at (5,-.3) {273};

\node at (0,-.3) {1};
\node at (1,1.3) {1};
\node at (2,2.3) {1};

\node at (1,-.3) {1};
\node at (2,-.3) {3};
\node at (3,-.3) {12};
\node at (4,-.3) {55};
\node at (5,-.3) {273};

\node at (1,1.2) {2};

\node at (2,1.2) {7};
\node at (1,2.2) {3};

\node at (3,1.2) {30};
\node at (2,2.2) {12};
\node at (1,3.2) {4};

\node at (4,1.2) {143};
\node at (3,2.2) {55};
\node at (2,3.2) {18};
\node at (2,4.2) {25};
\node at (1,4.2) {5};
\end{tikzpicture}
\end{center}
\caption{The Fuss-Catalan matrix for $r=3$ $(t(x), t(x))$ as a path matrix}\label{TerM}
\end{figure}

\section{Further directions}
The fact that we have factorizations means that we can easily introduce extra parameters into our constructions. For instance, from the expression
$$\sum_{j=0}^n \frac{rj+1}{(r-1)n+j+1} \binom{rn}{n-j}\binom{k}{j}$$ for the Fuss-Catalan matrix we can proceed to introduce another parameter
$$\sum_{j=0}^n \frac{rj+1}{(r-1)n+j+1}s^{n-j} \binom{rn}{n-j}\binom{k}{j}.$$
For $r=2$, we obtain a generalized Fuss-Catalan matrix that begins
$$\left(
\begin{array}{cccc}
 1 & 1 & 1 & 1  \\
 s & s+1 & s+2 & s+3  \\
 2s^2 & 2s^2+3s & 2s^2+6s+1 & 2s^2+9s+3  \\
 5s^2 & 5s^3+9s^2 & 5s^3+18s^2+5s & 5s^3+27s^2+15s+1  \\
\end{array}\right).$$
This is the rectification of the Riordan array
$$\left(\frac{1-\sqrt{1-4sx}}{2sx}, \frac{1-2s(1-s)x-\sqrt{1-4sx}}{2s^2}\right)=(g_2(sx), xg_2(sx))\cdot (1,x(1+(1-s)x)).$$
Thus even in this simple case, we see that new features are appearing through this parameterizations.

Far-reaching generalizations of the Fuss-Catalan numbers have been explored, notably using the Fuss-Catalan-Qi numbers \cite{Qi1}. The techniques of this note may prove useful in their further study.

Another direction for generalization is to use other banded matrices in the ``right binomial transform'' process. To put this in a general context, we have the following result.
\begin{proposition} The down-shift of the product of the Riordan array $(g(x), f(x))$ with the transpose of the binomial matrix $\mathbf{B}^T$ is given by the Riordan array
$$(g(x), x(1+f(x)).$$
\end{proposition}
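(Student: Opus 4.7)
The plan is to work at the level of bivariate generating functions and exploit the fact that both right-multiplication by $\mathbf{B}^T$ and the downshift have clean effects on these generating functions. Since the proof for the special case $(g_r(x), x g_r(x)^r)$ appeared earlier in the paper as part of the Fuss-Catalan proposition, the present statement is essentially the same argument with $x g_r(x)^r$ replaced by an arbitrary $f(x)$.

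First, I would compute the bivariate generating function of $(g(x), f(x)) \cdot \mathbf{B}^T$ directly from its entries. Since $(\mathbf{B}^T)_{j,k} = \binom{k}{j}$, the $(n,k)$ entry of the product is $\sum_{j \ge 0}[x^n]g(x)f(x)^j \, \binom{k}{j}$. Summing against $x^n y^k$ and swapping the order of summation reduces the computation to evaluating $\sum_{k \ge 0}\binom{k}{j}y^k = y^j/(1-y)^{j+1}$, a standard identity. This yields
\begin{align*}
\sum_{n,k \ge 0}\bigl((g,f)\mathbf{B}^T\bigr)_{n,k}\, x^n y^k
&= \frac{1}{1-y}\cdot\frac{g(x)}{1-\frac{y}{1-y}f(x)} \\
&= \frac{g(x)}{1-y-y f(x)} = \frac{g(x)}{1-y(1+f(x))}.
\end{align*}

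Second, I would apply the downshift operation, which by definition replaces $y$ with $xy$ in the bivariate generating function. This converts the expression above into
$$\frac{g(x)}{1-xy(1+f(x))},$$
and this is, by the general form $g(x)/(1-y\tilde{f}(x))$ with $\tilde{f}(x)=x(1+f(x))$, precisely the bivariate generating function of the Riordan array $(g(x), x(1+f(x)))$. Note that $x(1+f(x)) \in \mathcal{F}_1$, so this is indeed a legitimate Riordan array.

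There is no real obstacle beyond the two short computations above; the only point to verify carefully is that the conventions for ``transpose of $\mathbf{B}$'' and for ``downshift'' are applied consistently, which matches the ones fixed earlier in the paper (and reproduces, as a sanity check, the Fuss-Catalan identity with $f(x)=xg_r(x)^r$, where $x(1+f(x))=x(1+xg_r(x)^r)=xg_r(x)$, recovering the Fuss-Catalan-Riordan array $(g_r(x),xg_r(x))$).
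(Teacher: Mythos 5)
Your proposal is correct and follows essentially the same route as the paper: compute the bivariate generating function of the product with $\mathbf{B}^T$, simplify to $\frac{g(x)}{1-y(1+f(x))}$, and apply the downshift $y\to xy$. The only difference is that you justify the effect of right-multiplication by $\mathbf{B}^T$ from the entries via $\sum_{k}\binom{k}{j}y^k=y^j/(1-y)^{j+1}$, where the paper simply invokes the known rule; this is a harmless (indeed welcome) extra verification.
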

\begin{proof}
The generating function of $(g(x), f(x))$ is given by $\frac{g(x)}{1-yf(x)}$. The generating function of the product of this array by $\mathbf{B}^T$ on the right is given by
$$\frac{1}{1-y} \frac{g(x)}{1-\frac{y}{1-y}f(x)}=\frac{g(x)}{1-y-yf(x)}.$$ Down-shifting this ($y\to xy$) gives us the generating function
$$\frac{g(x)}{1-xy-xyf(x)}=\frac{g(x)}{1-yx(1+f(x))}.$$
This is the generating function of the Riordan array $(g(x), x(1+f(x))$.
\end{proof}
\begin{example}
We consider the banded production matrix
$$\left(
\begin{array}{ccccccc}
0 &1 & 0 & 0 & 0 & 0 & 0 \\
0 & 0 & 1 & 0 & 0 & 0 & 0 \\
1 & 0 & 0 & 1 & 0 & 0 & 0 \\
0 & 1 & 0 & 0 & 1 & 0 & 0 \\
0 & 0 & 1 & 0 & 0 & 1 & 0 \\
0 & 0 & 0 & 1 & 0 & 0 & 1 \\
0 & 0 & 0 & 0 & 1 & 0 & 0 \\
\end{array}\right).$$
We are thus considering the Riordan array $\left(\frac{1}{1+x^3}, \frac{x}{1+x^3}\right)^{-1}=(t(x^3), xt(x^3))$.
Multiplying by $\mathbf{B}^T$ on the right leads us to the square matrix $(t(x^3), 1+t(x^3))$, and to its downshift, $(t(x^3), x(1+t(x^3))$. We find that
$$(t(x^3), x(1+t(x^3))^{-1}=\left(\frac{1+3x+(1+2x)\sqrt{1+2x-3x^2}}{2(1+3x)}, \frac{2x^2}{1+3x-\sqrt{1+2x+3x^2}}\right).$$ 
This is turn is equal to the product 
$$\left(\frac{1}{1+x^3}, \frac{x}{1+x^3}\right)^{-1}\cdot \left(1, \frac{x}{1-x+x^2}\right).$$ In terms of sequence discovery, it is interesting to note that
$g(x)=\frac{1+3x+(1+2x)\sqrt{1+2x-3x^2}}{2(1+3x)}$ expands to give a sequence that begins
$$1, 0, 0, -1, 3, -9, 26, -75, 216, -623, 1800,\ldots.$$
The Hankel transform \cite{Layman} of this sequence is then given by
$$1, 0, -1, 2, 0, -2, 3, 0, -3, 4, 0, -4, 5, 0, -5, 6, 0,\ldots.$$
\end{example}
\begin{example}
We consider the banded production matrix
$$\left(
\begin{array}{ccccccc}
1 & 1 & 0 & 0 & 0 & 0 & 0 \\
1 & 1 & 1 & 0 & 0 & 0 & 0 \\
1 & 1 & 1 & 1 & 0 & 0 & 0 \\
0 & 1 & 1 & 1 & 1 & 0 & 0 \\
0 & 0 & 1 & 1 & 1 & 1 & 0 \\
0 & 0 & 0 & 1 & 1 & 1 & 1 \\
0 & 0 & 0 & 0 & 1 & 1 & 1 \\
\end{array}\right).$$ This generates the Bell Riordan array
$\left(\frac{f(x)}{x}, f(x)\right)$ where $f(x)=\Rev\{\frac{x}{1+x+x^2+x^3}\}$ (\seqnum{A036765}). We find that
$$\left(\frac{f(x)}{x}, x(1+f(x))\right)^{-1}=\left(\frac{1+x-2x^2+(1+x)\sqrt{1-4x^2}}{2(1+2x)}, x\frac{1+2x+\sqrt{1-4x^2}}{2(1+2x)}\right).$$ This is equal to the product 
$$\left(\frac{x}{1+x+x^2+x^3}, \frac{x}{1+x+x^2+x^3}\right)^{-1} \cdot \left(1,\frac{x}{1+x^2}\right).$$ 
Then $\frac{1+2x+\sqrt{1-4x^2}}{2(1+2x)}$ expands to give the sequence
$$1, -1, 1, -2, 3, -6, 10, -20, 35, -70, 126,\ldots$$ which is an alternating sign version of \seqnum{A210736}. Let $a_n$ denote the sequence $1,-1,1,-2,\ldots$. The Hankel transform of $a_n$ is the sequence $1,0,-1,0,1,0,-1,0,\ldots$. The Hankel transform of $a_{n+1}$ is $-(-1)^n$, and the Hankel transform of $a_{n+2}$ is $(-1)^{\binom{n+1}{2}}$. Finally, the Hankel transform of $0,1,-1,1,-2,\ldots$ is
$$0, -1, 1, -2, 2, -3, 3, -4, 4, -5, 5,\ldots.$$
The first component, $\frac{1+x-2x^2+(1+x)\sqrt{1-4x^2}}{2(1+2x)}$, expands to give a sequence that begins
$$1, -1, 0, -1, 1, -3, 4, -10, 15, -35, 56,\ldots,$$ where the sequence that begins
$$ 0,1, 1, 3, 4, 10, 15, 35, 56,\ldots$$ is $\binom{n}{\lfloor \frac{n-1}{2} \rfloor}$, \seqnum{A037952}. The Hankel transform of the sequence $1, -1, 0, -1, 1, -3, 4, -10,\ldots$ begins
$$1, -1, -2, 3, 0, -3, 4, -1, -5, 6, 0, -6, 7, -1, -8, 9,\ldots.$$ The generating function of this sequence is
$$\frac{1-x^2}{(1-x+x^2)(1+x+x^2)^2}.$$
\end{example}
\begin{example} As a final example, we take the case of the Riordan array
$$\left(\frac{1}{1+2x+2x^2+x^3}, \frac{x}{1+2x+2x^2+x^3}\right)^{-1}.$$ This will have a production matrix which begins
$$\left(
\begin{array}{ccccccc}
2 & 1 & 0 & 0 & 0 & 0 & 0 \\
2 & 2 & 1 & 0 & 0 & 0 & 0 \\
1 & 2 & 2 & 1 & 0 & 0 & 0 \\
0 & 1 & 2 & 2 & 1 & 0 & 0 \\
0 & 0 & 1 & 2 & 2 & 1 & 0 \\
0 & 0 & 0 & 1 & 2 & 2 & 1 \\
0 & 0 & 0 & 0 & 1 & 2 & 2 \\
\end{array}\right).$$  We find that the result of multiplying by the transpose of the binomial matrix on the right and downshifting results in the Riordan array given by the product
$$\left(\frac{1}{1+2x+2x^2+x^3}, \frac{x}{1+2x+2x^2+x^3}\right)^{-1} \cdot \left(1, \frac{x}{1+x+x^2}\right).$$ This new matrix has a production that begins
$$\left(
\begin{array}{ccccccc}
2 & 1 & 0 & 0 & 0 & 0 & 0 \\
2 & 1 & 1 & 0 & 0 & 0 & 0 \\
3 & 1 & 1 & 1 & 0 & 0 & 0 \\
6 & 2 & 1 & 1 & 1 & 0 & 0 \\
13 & 4 & 2 & 1& 1 & 1 & 0 \\
30 & 9 & 4 & 2 & 1 & 1 & 1 \\
72 & 21 & 9 & 4 & 2 & 1 & 1 \\
\end{array}\right).$$
The first column is essentially the sum of consecutive Motzkin numbers, and the other columns are defined by the Motzkin numbers, pre-pended by $1$.
This new Riordan array is
$$\left(\frac{1-x-2x^2+\sqrt{1-2x-3x^2}}{2(1+x)}, \frac{x(1+x+\sqrt{1-2x-3x^2})}{2(1+x)}\right)^{-1}=(g(x), f(x)),$$ with
$$g(x)=\frac{1}{x}\left(\frac{2}{3}\sqrt{\frac{3-2x}{x}}\sin\left(\frac{1}{3}\sin^{-1}\left(\frac{x(7x+18)\sqrt{\frac{3-2x}{x}}}{2(2x-3)^2}\right)\right)-\frac{2}{3}\right),$$ and
$$f(x)=\frac{2}{3}\sqrt{x(3-2x)} \sin\left(\frac{1}{3}\sin^{-1}\left(\frac{(7x+18)\sqrt{x(3-2x)}}{2(2x-3)^2}\right)\right)+\frac{x}{3}.$$
\end{example}
In general, if we start with the Riordan array $\left(\frac{1}{1+ax+bx^2+cx^3}, \frac{x}{1+ax+bx^2+cx^3}\right)^{-1}$, whose production matrix begins 
$$\left(
\begin{array}{ccccccc}
a & 1 & 0 & 0 & 0 & 0 & 0 \\
b & a & 1 & 0 & 0 & 0 & 0 \\
c & b & a & 1 & 0 & 0 & 0 \\
0 & c & b & a & 1 & 0 & 0 \\
0 & 0 & c & b & a & 1 & 0 \\
0 & 0 & 0 & c & b & a & 1 \\
0 & 0 & 0 & 0 & c & b & a \\
\end{array}\right),$$ then multiplying by the transpose of the binomial matrix on the right and downshifting leads to the matrix 
$$\left(\frac{1}{1+ax+bx^2+cx^3}, \frac{x}{1+ax+bx^2+cx^3}\right)^{-1}\cdot \left(1, \frac{x(1+x)}{1+ax+bx^2+cx^3}\right).$$
We have the following general result.
\begin{proposition} Given a Riordan array $(g(x), f(x))$, then the result of multiplying $(g(x), f(x))^{-1}$ by the transpose of the binomial matrix on the right and then downshifting is given by 
$$(g(x), f(x))^{-1} \cdot (1, (1+x)f).$$ 
\end{proposition}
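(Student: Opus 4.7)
The plan is to deduce this claim from the earlier downshift proposition (the one giving the downshift of $(g(x), f(x)) \cdot \mathbf{B}^T$ as $(g(x), x(1+f(x)))$) applied with $(g, f)^{-1}$ in the role of the generic Riordan array, and then to recognize the resulting Riordan array as the factored product stated on the right-hand side by one application of the Riordan composition rule.

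First I would rewrite the inverse explicitly via the standard Riordan inversion formula as
$$(g(x), f(x))^{-1}=\left(\frac{1}{g(\bar{f}(x))},\, \bar{f}(x)\right).$$
Applying the preceding downshift proposition with this inverse playing the role of the generic Riordan array shows that the downshift of $(g, f)^{-1}\cdot \mathbf{B}^T$ equals
$$\left(\frac{1}{g(\bar{f}(x))},\, x(1+\bar{f}(x))\right).$$

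Next I would expand the claimed right-hand side $(g(x), f(x))^{-1} \cdot (1, (1+x)f(x))$ using the Riordan product rule $(a(x), b(x))\cdot (c(x), d(x)) = (a(x)\,c(b(x)),\, d(b(x)))$. The first component is unchanged (since the $c$-factor is $1$) and stays $\frac{1}{g(\bar{f}(x))}$, while the second component becomes $(1+\bar{f}(x))\cdot f(\bar{f}(x))$. Using the defining identity $f(\bar{f}(x))=x$, this collapses to $x(1+\bar{f}(x))$, matching the expression obtained from the downshift proposition.

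The two Riordan arrays agree, and the proof is complete. There is essentially no obstacle here: the prior downshift proposition does the combinatorial work of handling the binomial transpose and the downshift, and the verification reduces to a single composition computation together with the compositional inverse identity $f\circ \bar{f}=x$.
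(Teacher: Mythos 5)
Your proof is correct and follows essentially the same route as the paper: identify the downshifted product as $\left(\frac{1}{g(\bar{f})}, x(1+\bar{f})\right)$ and then factor it using $f(\bar{f})=x$ and the Riordan product rule. The only (cosmetic) difference is that you invoke the earlier downshift proposition to get the first step, whereas the paper re-runs the bivariate generating-function computation inline; the substance is identical.
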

\begin{proof} We have $(g(x), f(x))^{-1}=\left(\frac{1}{g(\bar{f})}, \bar{f}\right)$. Multiplying by the transpose of the binomial on the right gives us the matrix with generating function 
$$ \left(\frac{1}{1-y}, \frac{y}{1-y}\right)\cdot \frac{\frac{1}{g(\bar{f})}}{1-y \bar{f}}=\frac{\frac{1}{g(\bar{f})}}{1-y-y\bar{f}}.$$ 
Downshifting gives us the Riordan array with generating function 
$$\frac{\frac{1}{g(\bar{f})}}{1-xy-xy\bar{f}}=\frac{\frac{1}{g(\bar{f})}}{1-xy(1+\bar{f})}.$$ 
This is the Riordan array $\left(\frac{1}{g(\bar{f})}, x(1+\bar{f})\right)$. We then have 
\begin{align*} \left(\frac{1}{g(\bar{f})}, x(1+\bar{f})\right)&=\left(\frac{1}{g(\bar{f})}, f(\bar{f})(1+\bar{f})\right)\\
&=\left(\frac{1}{g(\bar{f})}, \bar{f}\right)\cdot (1, (1+x)f)\\
&=(g(x), f(x))^{-1} \cdot (1, (1+x)f).\end{align*}
\end{proof}
\begin{corollary} $$(g_r(x), x g_r(x))=\left(\frac{1}{1+x}, \frac{x}{(1+x)^r}\right)^{-1} \cdot \left(1, \frac{x}{(1+x)^{r-1}}\right).$$
\end{corollary}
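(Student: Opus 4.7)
The plan is to obtain the corollary as an immediate specialization of the preceding proposition. I set $g(x) = \frac{1}{1+x}$ and $f(x) = \frac{x}{(1+x)^r}$. A short algebraic simplification gives
$$(1+x)f(x) = (1+x)\cdot\frac{x}{(1+x)^r} = \frac{x}{(1+x)^{r-1}},$$
so the factor $(1,(1+x)f)$ appearing in the proposition matches exactly the second Riordan factor on the right-hand side of the corollary.

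Next, I invoke the preceding proposition with this choice of $g$ and $f$. It says that the downshift of $(g(x),f(x))^{-1}\cdot \mathbf{B}^T$ equals $(g(x),f(x))^{-1}\cdot(1,(1+x)f)$. Substituting the specific $g,f$ yields
$$\left(\frac{1}{1+x}, \frac{x}{(1+x)^r}\right)^{-1}\cdot\left(1,\frac{x}{(1+x)^{r-1}}\right),$$
which is the right-hand side of the corollary. To finish, I would identify this expression with $(g_r(x), xg_r(x))$. The Lagrange inversion computation carried out earlier in the paper gives $\left(\frac{1}{1+x}, \frac{x}{(1+x)^r}\right)^{-1} = (g_r(x), xg_r(x)^r)$, so the right-hand side becomes $(g_r(x), xg_r(x)^r)\cdot\left(1,\frac{x}{(1+x)^{r-1}}\right)$, and the earlier proposition (whose proof simplifies $xg_r(x)^r/(1+xg_r(x)^r)^{r-1}$ to $xg_r(x)$) identifies this as $(g_r(x), xg_r(x))$.

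No genuine obstacle appears here; the entire derivation is bookkeeping that assembles results already established. The only point meriting care is consistency of conventions: one must confirm that the ``downshift after right multiplication by $\mathbf{B}^T$'' produced by the preceding proposition coincides with the canonical factorization $(g_r(x), xg_r(x)) = (g_r(x), xg_r(x)^r)\cdot(1, x/(1+x)^{r-1})$. Since both constructions simplify the second Riordan component through the identity $g_r(x)=1+xg_r(x)^r$, this compatibility is immediate, and the corollary follows.
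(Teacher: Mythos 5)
Your proposal is correct and follows essentially the same route as the paper: specialize the preceding proposition to $g=\frac{1}{1+x}$, $f=\frac{x}{(1+x)^r}$, simplify $(1+x)f$ to $\frac{x}{(1+x)^{r-1}}$, and identify the result with $(g_r(x),xg_r(x))$ via the earlier factorization $(g_r(x),xg_r(x))=(g_r(x),xg_r(x)^r)\cdot\bigl(1,\frac{x}{(1+x)^{r-1}}\bigr)$ together with $(g_r(x),xg_r(x)^r)=\left(\frac{1}{1+x},\frac{x}{(1+x)^r}\right)^{-1}$. The paper's own proof is just a terser version of the same argument.
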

\begin{proof} By the above, we have 
$$(g_r(x), x g_r(x))=\left(\frac{1}{1+x}, \frac{x}{(1+x)^r}\right)^{-1} \cdot \left(1, \frac{x(1+x)}{(1+x)^{r}}\right).$$
\end{proof}

\section{Conclusions}
The Fuss-Catalan numbers have attracted much attention in the literature (see \cite{He_CF, Qi1, Yang} and the references therein). In this note we have approached them through the optic of the Riordan group of matrices. We have identified an ancillary matrix, namely $(g_r(x), gr(x)^r)$ as being of importance in this study. This brings into play the notion of $d$-orthogonality as a related area, which may need more study in terms of its relationship to the Fuss-Catalan numbers, and, for instance, their relations to lattice paths.

\bigskip
\hrule

\noindent 2010 {\it Mathematics Subject Classification}:
Primary 05A15; Secondary 15B36, 11B37, 11B83, 11C20, 42C05, 11Y55
\noindent \emph{Keywords:} Catalan number, Fuss-Catalan numbers, Riordan array, generating function, $d$-orthogonal polynomials, integer sequences, lattice path.

\bigskip
\hrule
\bigskip
\noindent (Concerned with sequences
\seqnum{A000012},
\seqnum{A000108},
\seqnum{A001764},
\seqnum{A002293},
\seqnum{A036765},
\seqnum{A037952},
\seqnum{A039599},
\seqnum{A109956},
\seqnum{A109962}, and
\seqnum{A110616}).

\end{document}